\renewcommand\atop[2]{\genfrac{}{}{0pt}{}{#1}{#2}}
\newcommand{\R}{\mathbb{R}}
\newcommand{\C}{\mathbb{C}}
\renewcommand{\H}{\mathbb{H}}
\newcommand{\Z}{\mathbb{Z}}
\newcommand{\cA}{\mathcal{A}}
\newcommand{\cE}{\mathcal{E}}
\newcommand{\eps}{\varepsilon}
\newcommand{\op}{\textrm{op}}
\def\oo{\otimes}
\numberwithin{equation}{section}
\theoremstyle{plain}
\newtheorem{theorem}[equation]{Theorem}
\newtheorem{corollary}[equation]{Corollary}
\newtheorem{lemma}[equation]{Lemma}
\theoremstyle{definition}
\newtheorem{variant}[equation]{Variant}
\newtheorem{examples}[equation]{Examples}
\newtheorem{example}[equation]{Example}
\newtheorem{defn}[equation]{Definition}
\theoremstyle{remark}
\newtheorem*{remm}{Remark}
\def\map#1{{\buildrel #1 \over \longrightarrow}}
\newcommand{\Hom}{\operatorname{Hom}}
\newcommand{\GA}{$G$--$\cA$\ }
\begin{document}
\title[Twisted $K$-theory and Grothendieck-Witt groups]%
{Twisted $K$-theory, Real $A$-bundles and Grothendieck-Witt groups}
\date{\today }
\author{Max Karoubi}
\address{Universit\'e Denis Diderot Paris 7, Institut Math\'ematique de
Jussieu -- Paris rive gauche}
\email{max.karoubi@gmail.com}
\urladdr{http://webusers.imj-prg.fr/~max.karoubi}

\author{Charles Weibel}
\address{Math.\ Dept., Rutgers University, New Brunswick, NJ 08901, USA}
\email{weibel@math.rutgers.edu}
\urladdr{http://math.rutgers.edu/~weibel}
\thanks{Weibel was supported by NSA and NSF grants,} 

\begin{abstract}
We introduce a general framework to unify several variants of
twisted topological $K$-theory. 
We focus on the role of finite dimensional real simple algebras 
with a product-preserving involution, 
showing that Grothendieck-Witt 
groups provide interesting examples of twisted $K$-theory. 
These groups are linked with the classification of algebraic 
vector  bundles on real algebraic varieties.
\end{abstract}

\maketitle

In our recent paper \cite{KWpre}, we compare the Witt group of an
algebraic variety over $\R$ with a purely topological invariant
we called $WR(X)$, associated to a space $X$ with involution.
In our comparison, $X$ is the underlying space of 
complex points of the algebraic variety, and the involution is
induced by complex conjugation.  
We are able to compute $WR(X)$ by comparing
it to classical equivariant topological $K$-theory \cite{Segal},
Atiyah's Real $K$-theory $KR(X)$ \cite{A},
and other familiar invariants.

The Witt group of skew-symmetric forms is approximated in a 
similar way in \cite{KWpre}, using another topological invariant. 
Surprisingly, the computation of this invariant is more subtle,
as it is linked with equivariant twisted $K$-theory 
(in the sense of \cite{DK} and \cite{AS}). 

This paper gives a more systematic study of this equivariant 
twisted $K$-theory, a variant we think is of independent interest.
Our emphasis will be on examples linked with finite dimensional 
simple $\R$-algebras and Grothendieck-Witt groups.

The first section of this paper develops the basic notions of equivariant
twisted $K$-theory in a geometric way, 
adapting many classical arguments of topological $K$-theory to 
vector bundles which are modules over an algebra bundle. 
We don't claim too much originality here.
Much of the recent theory has already been developed in an ad hoc way
(see for instance \cite{AS2}, \cite{Julg}, \cite{Crocker-Raeburn} or
the 2014 thesis of El-ka\"ioum Moutuou \cite{Moutuou}).

In the second section, we study specific examples linked with Real
vector bundles. We follow Atiyah's viewpoint \cite{A}, replacing $\C$
with $\R$-algebras with involutions. The most important examples 
for us are ``balanced'' algebras, such as Clifford algebras,
for which we can provide a simpler description of the theory.

In the last section we link twisted $K$-theory to the
Grothendieck-Witt groups of symmetric and skew-symmetric bilinear forms on 
Real $\cA$-bundles. In particular, we show that the
Grothendieck-Witt group of skew-symmetric bilinear forms is isomorphic
to a ``twisted $KR$-group'' associated to the quaternion algebra $\H$.
This group is different from the group of symplectic bundles 
defined by J.\,Dupont \cite{Dupont} in another context.

\smallskip

We are particularly grateful to Jonathan Rosenberg, who made 
important comments on the first version of this paper. 

\section{$G$--$\cA$ bundles}

In this section we present a variant of ($G$-equivariant) twisted $K$-theory
associated to an algebra bundle $\cA$ on a $G$-space $X.$
We begin with a quick review of the non-equivariant theory.

Let $A$ be a fixed Banach $\R$-algebra with unit and $X$ a paracompact space.
We suppose given a locally trivial bundle $\cA$ of Banach $\R$-algebras
on $X$, with fibers isomorphic (non-canonically) to  $A$; the
structure group of continuous algebra automorphisms has the 
compact-open topology (also called the \emph{strong} topology).

By an {\it $\cA$-bundle} we mean a locally trivial bundle $E$ of (left)
$\cA$-modules on $X$ such that $\cA\!\times_X\! E\to E$ is continuous,
and each fiber $E_x$ is a finitely generated projective $\cA_x$-module.
Morphisms $E\to F$ are bundle maps whose fibers are module maps.
The $\cA$-bundles form an additive category and we write $K^{\cA}(X)$ 
for the Grothendieck group of $\cA$-bundles on $X$.
When $\cA$ is a trivial algebra bundle,  an $\cA$-bundle is just a
(classical) {\it $A$-bundle}, and we write $K^A(X)$ for $K^{\cA}(X)$.

The simplest non-trivial examples arise when $\cA$ is an algebra
bundle with fiber $A=M_n(\C)$ and $X$ is a finite CW complex. 
In this case, the Morita equivalence classes of possible $\cA$
are classified by their class in the topological Brauer group of $X$,
which by a theorem of Serre (\cite[1.7]{Dix}) 
is the torsion subgroup of $H^3(X;\Z)$. Historically, these groups 
$K^{\cA}(X)$ were the first examples of twisted $K$-theory (see
\cite{DK}, \cite{Ktwist}). The theory has since developed 
to include the class in $H^3(X;\Z)$ as part of the data
(see \cite{Rosenberg}),
with the awareness that the generalization to modules over a bundle of
Banach algebras gives rise to interesting applications in physics,
as mentioned for example in \cite{Witten}.

Now let $G$ be a compact Lie group acting on $X$. As usual, we say that
$G$ acts on a bundle $E$ if $G$ acts on $E$ compatibly with the
structure map $E\to X$, in the sense that multiplication by $g\in G$ sends
$E_x$ to $E_{gx}$.  If $\cA$ is a bundle of Banach algebras 
with fiber $A$, we say that $G$ acts on $\cA$ if $G$ acts on the underlying
bundle of $\cA$ by algebra isomorphisms, i.e., if multiplication
by any $g\in G$ is an algebra isomorphism $\cA_x\to\cA_{gx}$ for each $x\in X$.
The notion of a \GA bundle is somewhat related to
Fell's Banach $*$-algebraic bundles over $G$ \cite{Fell}.

\begin{defn}\label{def:GA}
Let $E$ be an $\cA$-bundle whose fiber is a 
finitely generated projective
$A$-module. We say that $E$ is a $G$--$\cA$ bundle on $X$
if $G$ acts on $E$ (and $\cA$) so that
\[
g(a\cdot e)=g(a)\cdot g(e),\quad
\forall g\in G, x\in X, a\in\cA_x, e\in E_x.
\]
We write $K_G^{\cA}(X)$ for the Grothendieck group of the 
additive category of $G$--$\cA$ bundles.
When $G$ acts on $A$ and $\cA$ is the trivial algebra bundle $X\times A$,
we will write $K_G^A(X)$ for $K_G^{\cA}(X)$.

The group $K_{G}^{\cA}(X)$ is contravariantly functorial in the 
variables $G$ and $X$. It is also covariant in $\cA$ (and contravariant
for finite flat maps $\cA\to\cA'$).
\end{defn}

The groups $K_G^{\cA}(X)$ are an equivariant version of twisted
$K$-theory \cite{Ktwist} \cite{Crocker-Raeburn}.  
The prototype of this construction is when
$\cA$ is an algebra bundle with fiber $M_n(\C)$ and $G$ acts 
trivially on $\C$. In this case, Atiyah and Segal \cite{AS2} have 
shown that $G$-algebra bundles are classified
up to Morita equivalence by their 
class in the {\it equivariant Brauer group} $\textrm{Br}_G(X)$, 
which is the torsion subgroup of $H^3(EG\!\times_GX,\Z)$.


\begin{examples}\label{G-A bundles}
Suppose that $\cA$ is a trivial bundle with fiber $A$.

a) When 
$G$ acts trivially on $A$, a $G$--$\cA$ bundle is just an
$A$-linear $G$-bundle. If $A$ is a finite simple algebra, 
$K_G^{\cA}(X)$ is the usual equivariant $K$-group 
$KO_G(X)$, $KU_G(X)$ or $KSp_G(X)$, depending on $A$.

b) When $\cA$ is the trivial bundle with fiber $\C$, and $G$ is
the cyclic group $\mathrm{Gal}(\C/\R)$, a $G$--$\cA$ bundle is 
the same thing as a
Real vector bundle in the sense of Atiyah \cite{A}, 
and our $K^{\C}_G(X)$ is Atiyah's $KR(X)$.

c) When $\cA$ is the trivial bundle with fiber $\H$, and $G$ is a
finite subgroup of $\H^\times$ acting by inner automorphisms on $\H$,
the notion of $G$--$\cA$ bundle seems new. We call these
{\it Real quaternionic bundles}; see Examples \ref{ex:AG} 
and \ref{quaternionic}. We will see 
in Theorem \ref{GRA} and Example \ref{ex:GRA}
that this case is related to the Grothendieck-Witt group of 
skew-symmetric bilinear forms on vector bundles over $X$.

d) (Morita invariance). If $G$ acts on $\cA$ then $G$ acts slotwise on 
$M_n(\cA)$, and the Morita equivalence of $\cA$-bundles and
$M_n(\cA)$-bundles extends to an equivalence between the categories
of \GA bundles and $G$--$M_n(\cA)$ bundles.
Thus $K^{\cA}_G(X)\cong K^{M_n(\cA)}_G(X)$.
\end{examples}

e) Suppose that $G$ acts on $A$, and acts trivially on $X$,
so that $G$ acts on the trivial algebra bundle $\cA$.
Recall that the twisted group ring $A\rtimes G$ is the 
left $A$-module $A\times G$ with multiplication 
$g\cdot a = g(a)\cdot g$. In this case, 
we can form the trivial algebra bundle $\cA\rtimes G$
with fiber $\cA\rtimes G$, and a \GA bundle on $X$ is
the same as an $A\rtimes G$-linear bundle. Indeed,
a left $A\rtimes G$-module $E$ is the same as a left 
$A$-module, with an action of $G$, satisifying the
intertwining relation of Definition \ref{def:GA}.
\smallskip

Many properties of ordinary vector bundles remain valid for \GA bundles.
For example, the kernel of a surjection $E\map{s} E''$ of $G$--$\cA$ bundles
is the subbundle whose fiber at $x$ is the kernel of $E_x \to E''_x$.

\begin{lemma}\label{splitting}
Let $E\map{s} E''$ be a surjection of \GA bundles. Then
the kernel $E'$ of this map is a \GA bundle, and 
\[
0\to E'\to E\map{s} E''\to 0
\]
is a split exact sequence.
\end{lemma}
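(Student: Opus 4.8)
The plan is to reduce everything to the construction of a single morphism: a $G$-equivariant $\cA$-linear section $t\colon E''\to E$ of $s$. Once such a $t$ is in hand, the composite $t\circ s$ is an idempotent $G$--$\cA$ endomorphism of $E$ (since $s\circ t=\id_{E''}$ forces $(t s)^2=t(st)s=ts$), and $\id_E-t\circ s$ is a $G$--$\cA$ projection of $E$ onto $\ker s=E'$. This exhibits $E'$ as a direct summand, $E\cong E'\oplus E''$, in the category of \GA bundles. As a retract of a locally trivial bundle, $E'$ is locally trivial, and each fiber $E'_x$ is a direct summand of the finitely generated projective $A$-module $E_x$, hence again finitely generated projective; the $G$- and $\cA$-actions restrict to $E'$. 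Thus producing $t$ proves both assertions at once.

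To build $t$ I would first work locally. Over a sufficiently small open set $U$ I can simultaneously trivialize $\cA$, $E$ and $E''$, so that $s|_U$ becomes a continuous family of $A$-linear surjections between fixed finitely generated projective $A$-modules. Fixing an $A$-linear section $\sigma$ at one point of $U$ and composing it with the inverse of the endomorphism $s\circ\sigma$ (invertible on a neighborhood because it is close to the identity in the Banach algebra $\operatorname{End}_{A}(E''_x)$) yields a continuous $\cA$-linear section $\sigma_U\colon E''|_U\to E|_U$. I then patch these with a partition of unity $\{\phi_U\}$ subordinate to such a cover, available since $X$ is paracompact, by setting $\sigma=\sum_U\phi_U\,\sigma_U$. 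Because $A$ is an $\R$-algebra the real scalars $\phi_U(x)$ act centrally, so $\sigma$ is still $\cA$-linear, and $s\circ\sigma=\sum_U\phi_U\,(s\circ\sigma_U)=\id_{E''}$. Thus $\sigma$ is a global $\cA$-linear section of $s$, not yet $G$-equivariant.

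Finally I would average over $G$. For $g\in G$ define $(g\cdot\sigma)(e'')=g\,\sigma(g^{-1}e'')$ for $e''\in E''_x$; using the $G$-equivariance of $s$ and the intertwining relation $g(a\cdot e)=g(a)\cdot g(e)$ of Definition \ref{def:GA}, one checks that each $g\cdot\sigma$ is again an $\cA$-linear section of $s$. I then set
\[
t=\int_G (g\cdot\sigma)\,dg,
\]
the integral taken against normalized Haar measure on the compact group $G$. Invariance of Haar measure gives $h\cdot t=t$ for every $h\in G$, so $t$ is $G$-equivariant; continuity and linearity of the fiberwise $\cA$-action let the integral commute with multiplication by $a\in\cA_x$, so $t$ remains $\cA$-linear; and $s\circ t=\int_G s\circ(g\cdot\sigma)\,dg=\int_G \id\,dg=\id_{E''}$. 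Hence $t$ is the desired \GA splitting.

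The main obstacle is keeping track of $\cA$-linearity through the two averaging operations. The partition-of-unity sum preserves it only because real scalars are central in the $\R$-algebra $A$, and the $G$-average preserves it only because of the intertwining relation together with the continuity of the module action, which is what justifies pulling multiplication by $a\in\cA_x$ through the Haar integral. Compactness of $G$ (hence existence of Haar measure) and paracompactness of $X$ (hence existence of partitions of unity) are exactly the hypotheses that make these two steps succeed.
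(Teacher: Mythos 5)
Your proof is correct and follows essentially the same route as the paper: choose local $\cA$-linear splittings, glue them with a partition of unity, and then average over $G$ with Haar measure to obtain an equivariant splitting, from which the idempotent $t\circ s$ exhibits $E'$ as a \GA direct summand. The extra detail you supply (invertibility of $s\circ\sigma$ near a point, centrality of real scalars, commuting the Haar integral with the module action) just fleshes out steps the paper leaves implicit.
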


\begin{proof}
Clearly, 
$E'$ is a \GA subbundle of $E$. To split
the short exact sequence, choose an arbitrary $\cA$-bundle splitting
$E''\to E$; this may be done locally on $X$ and the splittings may
be combined using a partition of unity, as in the classical setting.
Using a Haar measure on $G$, we can average this splitting 
to get an $G$-equivariant $\cA$-module splitting $t$. 
Since $t\circ s$ is idempotent
with kernel $E'$, the sequence is split exact.
\end{proof}

When $X$ is a point and $G$ acts trivially on $A$,
a \GA module is just a finite $A[G]$-module. If $A=\C$
then, as in Example \ref{G-A bundles}(a),
$K^{\C}_G(X)$ is the character ring $R(G)$ of $G$. 
This shows that a \GA bundle need not be a summand 
of a fixed reference bundle $X\times A^n$. 

We will show that every bundle is a summand 
of a different kind of ``trivial'' bundle.
In our setting we define a ``trivial'' \GA bundle to be a bundle of
the type $\cA\oo M$, where $M$ is a finite dimensional 
$\R[G]$-module and $G$ acts diagonally. 
The following theorem is borrowed from 
Segal's paper on equivariant $K$-theory (\cite[p.\,134]{Segal}).

\begin{theorem}\label{summand}
Let $E$ be a $G$--$\cA$ bundle on a compact space. Then
there exists a $G$--$\cA$ bundle $F$ such that $E\oplus F$ is
isomorphic to $\mathcal{A}\oo M$ for some finite
dimensional $G$-module $M.$
\end{theorem}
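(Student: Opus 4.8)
The plan is to reduce the statement to the splitting Lemma~\ref{splitting} by producing a $G$-equivariant $\cA$-module surjection from a ``trivial'' bundle $\cA\otimes M$ onto $E$. Once such a surjection $\cA\otimes M\map{\varphi} E$ is in hand, Lemma~\ref{splitting} guarantees that it splits in the category of $G$--$\cA$ bundles, so that $\cA\otimes M\cong E\oplus F$ with $F=\ker(\varphi)$ a $G$--$\cA$ bundle; this is exactly the desired conclusion. Thus the whole problem is to construct $M$ and $\varphi$.

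To build $\varphi$, I would work with the space $\Gamma(E)$ of continuous global sections of $E$. This is a topological real vector space carrying a continuous linear $G$-action, given by $(g\cdot s)(x)=g\cdot s(g^{-1}x)$, and for each $x$ the evaluation $s\mapsto s(x)$ is a surjection $\Gamma(E)\to E_x$, since any element of a fiber extends to a global section by local triviality and a partition of unity. A finite-dimensional $G$-submodule $M\subseteq\Gamma(E)$ then determines a map $\varphi\colon\cA\otimes M\to E$ sending $a\otimes s\mapsto a\cdot s(x)$ over the fiber at $x$; a short check using the intertwining relation of Definition~\ref{def:GA} shows that $\varphi$ is both $\cA$-linear and $G$-equivariant. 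The task is therefore to find a single finite-dimensional $G$-submodule $M$ for which $\varphi$ is fiberwise surjective.

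Here I would use that $G$ is a compact Lie group. By the Peter--Weyl theorem, the union of finite-dimensional $G$-invariant subspaces is dense in $\Gamma(E)$: convolving any section against matrix coefficients of finite-dimensional representations produces nearby sections lying in finite-dimensional $G$-submodules. Fix $x_0\in X$; since $E_{x_0}$ is a finitely generated $\cA_{x_0}$-module, I would choose sections $s_1,\dots,s_k$ whose values generate $E_{x_0}$, then approximate each $s_i$ closely by a section $s_i'$ lying inside a finite-dimensional $G$-submodule $V_i$. As generation of $E_{x_0}$ is an open condition, the $s_i'$ still generate $E_{x_0}$; and since surjectivity of a bundle map between finitely generated projective $\cA$-modules is open, $\varphi$ is surjective on a neighborhood of $x_0$ once $M\supseteq V_1+\dots+V_k$. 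Covering the compact space $X$ by finitely many such neighborhoods and letting $M$ be the (still finite-dimensional) sum of the corresponding $G$-submodules yields a globally surjective equivariant map $\varphi\colon\cA\otimes M\to E$.

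The main obstacle is the middle step: upgrading the classical, non-equivariant choice of finitely many generating sections to sections that lie inside a genuine finite-dimensional representation of $G$. This is where compactness of $G$ and Peter--Weyl are essential, and where one must ensure the approximation is taken in a topology fine enough that ``still generates the fiber'' and ``surjective near $x_0$'' remain open conditions. Everything after that --- the equivariance and $\cA$-linearity of $\varphi$, the compactness patching, and the final splitting --- is routine, the last step being a direct appeal to Lemma~\ref{splitting}.
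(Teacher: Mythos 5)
Your proposal is correct and follows essentially the same route as the paper's proof (which is itself borrowed from Segal's equivariant $K$-theory paper): pass to the $G$-module of sections $\Gamma(E)$, use the Peter--Weyl theorem to find generating sections lying in finite-dimensional $G$-invariant subspaces, use openness of fiberwise generation plus compactness of $X$ to assemble a finite-dimensional $M$ with a surjection $\cA\oo M\to E$, and split it via Lemma~\ref{splitting}. The only difference is that you spell out the approximation and openness arguments in more detail than the paper does.
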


\begin{proof}
Let $\Gamma =\Gamma(E)$ be the topological space of continuous sections
of $E$; it is naturally a $G$-module. Let $\Gamma'$ denote the union
 of its finite dimensional $G$-invariant subspaces.
By a variant of the Peter-Weyl theorem, $\Gamma'$ is a dense invariant
subspace of $\Gamma$.
Now, for each point $x$ of $X,$ we choose a finite set $s^x_{i}$ of 
global sections such that the $(s^x_{i})(x)$ generate $E_{x}$
as an $\cA_{x}$-module. Since $\Gamma'$ is dense, we may choose the 
$s^x_{i}$ in $\Gamma'$. By continuity, there is an open neighbourhood
$U_{x}$ of $x$ such that the $(s_{i})(y)$ generate $E_{y}$ as an 
$\cA_{y}$-module for any $y\in U_{x}$. Since $X$ is compact, 
we only need the $s_i^x$ for finitely many $x$; they all lie in
a fixed finite dimensional $G$-invariant subspace $M$ of $\Gamma$. 
Thus there is a surjection $\cA\oo M\to E$.
We conclude thanks to Lemma \ref{splitting}.
\end{proof}

The usual argument shows that $K_{G}^{\cA}$ is a homotopy functor.
Let $I$ be the unit interval; if $X$ is a $G$-space, we regard
$X\times I$ as the $G$ space with $g(x,t)=(g(x),t)$.
Given an algebra bundle $\cA$ on $X$, we abusively write $\cA$ for
the pullback of $\cA$ along the projection $p:X\times I\to I$.

\begin{theorem}
If $X$ is a compact space, the projection $X\times I\map{p}X$ 
induces an isomorphism%
\begin{equation*}
p^*: K_{G}^{\cA}(X)\cong K_{G}^{\cA}(X\times I).
\end{equation*}
\end{theorem}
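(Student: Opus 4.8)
The plan is to show that $p^*$ is split injective and then that it is surjective, the latter by proving the geometric statement that every $G$--$\cA$ bundle on $X\times I$ is isomorphic to the pullback of its restriction to $X\times 0$. First I would note that the zero–section $i_0\colon X\to X\times I$, $x\mapsto(x,0)$, satisfies $p\circ i_0=\id_X$, so that $i_0^*\circ p^*=\id$ on $K_G^{\cA}(X)$; hence $p^*$ is split injective and the whole content of the theorem is its surjectivity. For surjectivity it suffices to produce, for each $G$--$\cA$ bundle $E$ on $X\times I$, an isomorphism of $G$--$\cA$ bundles $E\cong p^*(i_0^*E)$, where $i_0^*E=E|_{X\times 0}$.

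To establish this isomorphism I would pass to an idempotent picture via Theorem~\ref{summand}. Since $X\times I$ is compact, that theorem realizes $E$ as the image of a $G$-equivariant, $\cA$-linear idempotent endomorphism $e$, with $e^2=e$, of a trivial bundle $V=\cA\oo M$. Pulling back the restriction of $e$ along $i_0$ produces a second idempotent $\tilde e$ on $V$, namely $\tilde e(x,t)=e(x,0)$, constant in the $I$-direction, whose image is exactly $p^*(i_0^*E)$. The problem is thereby reduced to constructing a $G$--$\cA$ automorphism $U$ of $V$ over $X\times I$ with $U\,\tilde e\,U^{-1}=e$: such a $U$ restricts to an isomorphism $p^*(i_0^*E)=\operatorname{im}\tilde e\;\cong\;\operatorname{im} e=E$, and by Lemma~\ref{splitting} these images are honest $G$--$\cA$ subbundles.

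The conjugator is assembled from the standard Banach-algebra device for comparing nearby idempotents: in any Banach algebra the element $u=pq+(1-p)(1-q)$ satisfies $uq=pu$, and $u$ is invertible whenever $\|p-q\|$ is small, in which case $u$ conjugates $q$ to $p$. Here $e$ and $\tilde e$ lie in the Banach algebra of $G$-equivariant $\cA$-linear sections of the endomorphism bundle $\operatorname{End}_{\cA}(V)$, and since $u$ is an \emph{algebraic} expression in $p$ and $q$ it is automatically $G$-equivariant and $\cA$-linear; this is the crucial point that keeps the construction inside the category of $G$--$\cA$ bundles. Because $e$ is continuous and $X\times I$ is compact, $e$ is uniformly continuous, so I can choose a subdivision $0=t_0<\cdots<t_n=1$ of $I$ fine enough that $\|e(x,t)-e(x,t_i)\|$ lies below the uniform invertibility threshold for all $x$ and all $t\in[t_i,t_{i+1}]$. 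On each slab $X\times[t_i,t_{i+1}]$ the formula then yields an automorphism $u_i(x,t)=e(x,t)\,e(x,t_i)+(1-e(x,t))(1-e(x,t_i))$ conjugating the constant idempotent $e(-,t_i)$ to $e$, with $u_i(x,t_i)=1$. Composing these slab-wise conjugators, each normalized to be the identity at the left endpoint of its slab, produces a globally defined, continuous automorphism $U$ over $X\times I$ satisfying $U(x,t)\,e(x,0)\,U(x,t)^{-1}=e(x,t)$, which is the desired $U$.

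The main obstacle I anticipate is precisely this globalization over $I$: the device $u=pq+(1-p)(1-q)$ only works for uniformly close idempotents, so one cannot expect a single closed-form conjugator over all of $[0,1]$ and must instead use compactness to subdivide and then glue the slab-wise conjugators coherently across the common faces $X\times\{t_i\}$ (the normalization $u_i(-,t_i)=1$ is what makes the gluing continuous). A secondary point to check is that invertibility, inversion, and the glued product all stay within the $G$-equivariant $\cA$-linear sections, but this is guaranteed by the algebraic form of the formulas. This gives $E\cong p^*(i_0^*E)$, proving surjectivity and completing the argument.
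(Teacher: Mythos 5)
Your proof is correct and takes essentially the same route as the paper: both present the bundle as a continuous family of $G$-equivariant, $\cA$-linear idempotents on a trivial bundle $\cA\oo M$ via Theorem \ref{summand}, and both conjugate nearby idempotents by $u=pq+(1-p)(1-q)=1-p-q+2pq$ (exactly the operator $g$ in the paper), using compactness of $I$ to chain the local conjugators. Your write-up merely spells out the slab-wise gluing and the split-injectivity remark that the paper leaves implicit.
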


\begin{proof}
The proof is analogous to the one in the classical case 
\cite[I.7.1]{MKbook}: if $p=p(t)$ ($t\in I$) is a continuous family 
of projection operators on a trivial bundle $\cA\oo M,$
it is enough to show that $p(1)$ is isomorphic to $p(0).$ 
By compactness of $X$, each $t\in I$ has a neighborhood $U$
such that for each $u\in U$ the operator $g=1-p(t)-p(u)+2p(t)p(u)$ 
is invertible and $gp(u)g^{-1}=p(t)$.
We conclude by using the compactness of $I$.
\end{proof}

We can use Theorem \ref{summand} in order to prove an analogue of
the Serre-Swan theorem in this framework. If $B$ is a Banach algebra with a 
continuous $G$ action, we define the equivariant Grothendieck group 
$K_{G}(B)$ to be the Grothendieck group of the category 
of finitely generated 
projective $B$-modules with a 
continuous $G$ action. As in the case of bundles, we assume that 
the actions of $B$ and $G$ are intertwined: we have the relation
\begin{equation*}
g(b\cdot e)=g(b)\cdot g(e).
\end{equation*}%
Note that if $G$ is finite then this definition is purely algebraic.

If $E$ is a \GA bundle, and $B=\Gamma(\cA)$ is the Banach algebra 
of sections of the algebra bundle $\cA$, then 
Theorem \ref{summand} implies that the space of sections 
$\Gamma(E)$ is a finitely generated projective $B$-module 
with a continuous $G$-action.

\begin{theorem}\label{Swan}
If $X$ is compact, the functor $\Gamma$ defines an equivalence 
between the category of \GA bundles on $X$ and the category of 
finitely generated projective $B$-modules with a continuous $G$ action.
\end{theorem}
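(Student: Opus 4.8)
The plan is to verify that the section functor $\Gamma$ is additive, fully faithful, and essentially surjective onto the category $\mathcal P_G(B)$ of finitely generated projective $B$-modules with a continuous intertwined $G$-action. That $\Gamma$ takes values in $\mathcal P_G(B)$ was already observed above, as a consequence of Theorem \ref{summand}; additivity is clear since $\Gamma(E\oplus F)=\Gamma(E)\oplus\Gamma(F)$.

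For full faithfulness, I would first treat the ``trivial'' bundles $\cA\oo M$. On these a fibrewise computation, using $\Hom_{\cA_x}(\cA_x\oo M,\cA_x\oo N)\cong\cA_x\oo\Hom_\R(M,N)$ and the fact that a morphism is a continuous section of the resulting bundle, gives
\[
\Hom_{G\text{-}\cA}(\cA\oo M,\cA\oo N)\cong (B\oo\Hom_\R(M,N))^G\cong\Hom_{B[G]}(B\oo M,B\oo N),
\]
both outer groups being the $G$-invariants of $B\oo\Hom_\R(M,N)$, and $\Gamma$ realizing this identification. Since every \GA bundle is, by Theorem \ref{summand}, a direct summand of some $\cA\oo M$, and both categories are additive with $\Gamma$ additive, full faithfulness extends formally to all objects: writing $E=\operatorname{im}(p)$, $F=\operatorname{im}(q)$ for idempotents $p,q$ on trivial bundles, one has $\Hom(E,F)=q\,\Hom(\cA\oo M,\cA\oo N)\,p$ and likewise after applying $\Gamma$, and these are matched by the trivial-bundle case.

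For essential surjectivity, let $P$ be an object of $\mathcal P_G(B)$. The key point, and the main obstacle, is to realize $P$ as an equivariant direct summand of some $B\oo M=\Gamma(\cA\oo M)$; this step mirrors the proof of Theorem \ref{summand}. I would equip $P$ with its natural Banach-space topology (as a closed summand of some $B^n$) and invoke the Peter--Weyl theorem to see that the finite-dimensional $G$-invariant subspaces are dense in $P$. Choosing $B$-module generators of $P$ inside a single finite-dimensional $G$-invariant subspace $M\subseteq P$ then yields a $G$-equivariant $B$-linear surjection $B\oo M\to P$, $b\oo m\mapsto b\cdot m$, the intertwining relation of Definition \ref{def:GA} guaranteeing equivariance. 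Picking a $B$-linear splitting by projectivity of $P$ and averaging it over $G$ with respect to Haar measure produces a $G$-equivariant $B$-linear splitting, exhibiting $P$ as a summand of $B\oo M$ in $\mathcal P_G(B)$.

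Finally I would transport this back to bundles. The idempotent $e$ on $B\oo M$ with image $P$ corresponds, by the full faithfulness already established, to an idempotent endomorphism $\tilde e$ of the \GA bundle $\cA\oo M$. Applying Lemma \ref{splitting} to the surjection $\cA\oo M\to\operatorname{im}(\tilde e)$ produces a genuine \GA bundle $E=\operatorname{im}(\tilde e)$, and full faithfulness gives $\Gamma(E)\cong\operatorname{im}(e)=P$, so $\Gamma$ is essentially surjective. The delicate verifications are the density-and-averaging argument in the essential-surjectivity step (in particular that the averaged splitting remains $B$-linear) and the continuity claim in the trivial-bundle $\Hom$ computation; the remaining manipulations are formal.
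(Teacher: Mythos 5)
Your proposal is correct and follows the same route the paper intends: the classical Serre--Swan argument (full faithfulness via trivial bundles $\cA\oo M$ and idempotent splitting, essential surjectivity via a Peter--Weyl density and Haar-averaging argument), which the paper compresses into a citation of \cite[I.6.18]{MKbook} together with Theorem \ref{summand}. The only place you do genuinely more than the paper states is in re-running the Segal-style density-and-averaging argument on the module side to exhibit $P$ as an equivariant summand of some $B\oo M$ -- the paper asserts this is ``a direct consequence of Theorem \ref{summand}'' -- but that is exactly the right gap to fill and your treatment of it (openness of surjectivity, $B$-linearity of the averaged splitting via the intertwining relation) is sound.
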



\begin{proof}
This is completely analogous to the usual proof of the Serre-Swan
theorem \cite[I.6.18]{MKbook}. The most difficult step is to show that 
$\Gamma$ is essentially surjective; this is a direct consequence of 
Theorem \ref{summand}.
\end{proof}

\begin{theorem}\label{K(BxG)}
Let $G$ be a finite group and $B=\Gamma(\cA)$.
If $X$ is compact, $\Gamma $ defines an equivalence between 
the category of \GA bundles on $X$
and the category of finitely generated projective modules over 
$B\rtimes G$. Thus
\[ K^{\cA}_G(X) \cong K_0(B\rtimes G). \]
\end{theorem}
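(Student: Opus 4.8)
The plan is to reduce everything to Theorem \ref{Swan}. That theorem already identifies $\Gamma$ as an equivalence between $G$--$\cA$ bundles on $X$ and finitely generated projective $B$-modules equipped with a continuous, intertwined $G$-action; since $G$ is finite such an action is purely algebraic, so the only thing left is to match this module category with the category of finitely generated projective $B\rtimes G$-modules. The dictionary for this is exactly Example \ref{G-A bundles}(e), read at the level of modules rather than bundles: a left $B$-module together with an intertwined $G$-action is precisely a left $B\rtimes G$-module, restriction along $B\hookrightarrow B\rtimes G$ recovering the underlying $B$-module and its $G$-action. This is an equivalence of additive categories before any finiteness hypotheses, so the whole task is to check that the projectivity and finite-generation conditions correspond on the two sides.

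One direction is immediate. As a left $B$-module, $B\rtimes G$ is free of rank $|G|$, with basis $\{g\}_{g\in G}$. Hence any finitely generated projective $B\rtimes G$-module $P$, being a summand of some $(B\rtimes G)^n$, is also a summand of the free $B$-module $B^{n|G|}$; so its restriction is finitely generated projective over $B$. The reverse implication is the crux. Given a finitely generated projective $B$-module $M$ with intertwined $G$-action --- equivalently a $B\rtimes G$-module whose restriction is $B$-projective --- I set $R=B\rtimes G$ and consider the $R$-linear counit $\varepsilon\colon R\oo_B M\to M$, $r\oo m\mapsto rm$. Because $R$ is $B$-free, the induced module $R\oo_B M$ is $R$-projective and $\varepsilon$ is surjective, split over $B$ by $m\mapsto 1\oo m$. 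Since $|G|$ is invertible in $\R\subseteq B$, I average this $B$-splitting to
\[
s(m)=\frac{1}{|G|}\sum_{g\in G} g\oo (g^{-1}\cdot m),
\]
which one checks is $B$-linear (using the relation $g\cdot a=g(a)\cdot g$ in $R$) and $G$-equivariant, hence $R$-linear, and satisfies $\varepsilon\circ s=\id$. Thus $M$ is a direct summand of the $R$-projective module $R\oo_B M$, so it is finitely generated projective over $R$.

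Assembling the two implications shows that $\Gamma$ carries $G$--$\cA$ bundles exactly onto the finitely generated projective $B\rtimes G$-modules, giving the claimed equivalence of categories; passing to Grothendieck groups then yields $K^{\cA}_G(X)\cong K_0(B\rtimes G)$. The step I expect to demand the most care is the converse projectivity argument: one must verify that the averaged map $s$ is simultaneously $B$-linear and $G$-equivariant, where the invertibility of $|G|$ over $\R$ is essential. A smaller point to record explicitly is that for finite $G$ the continuity hypothesis in Theorem \ref{Swan} is automatic, so nothing is lost in passing to the purely algebraic category of $B\rtimes G$-modules.
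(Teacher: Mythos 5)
Your proof is correct and follows essentially the same route as the paper: reduce to Theorem \ref{Swan} and then establish the equivalence between finitely generated projective $B$-modules with intertwined $G$-action and finitely generated projective $B\rtimes G$-modules via an averaging argument. The paper leaves the averaging step as a one-line remark; your explicit splitting $s(m)=\frac{1}{|G|}\sum_g g\oo(g^{-1}\cdot m)$ of the counit $R\oo_B M\to M$ is precisely the standard way to carry it out, and your verification of its $B$-linearity, $G$-equivariance, and the two projectivity implications is sound.
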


\begin{proof}
It is easy to show via an averaging process that the category of 
finitely generated projective $B$-modules with a continuous $G$-action 
is equivalent to the category 
of finitely generated projective  $B\rtimes G$-modules.
\end{proof}

When a compact Lie group $G$ acts on a $C^*$-algebra $B$, 
Julg \cite{Julg} showed that the equivariant $K$-theory of $B$, 
$K_G(B)$, is canonically isomorphic to $K_0(B\rtimes G)$,
where $B\rtimes G$ is the twisted group ring.
Taking $B=\Gamma(\cA)$ yields a more general version of
Theorem \ref{K(BxG)}.

\begin{remm}
When $G$ is a finite group, $A$ is a finite dimensional algebra with 
a $G$-action and $\cA=X\times A$ is the trivial algebra bundle,
then $B$ is $C(X)\oo A$,
$C(X)$ being the ring of continuous functions on $X$. 

In particular, if $G$ acts trivially on $A$ then the usual category 
of $A$-linear $G$-bundles is equivalent to the category of finitely 
generated projective $(C(X)\rtimes G)\oo A$-modules, which is
equivalent to the category of $A$-linear $G$-bundles; 
see Example \ref{G-A bundles}(a).
\end{remm}

Part (a) of the following purely algebraic theorem was originally 
proven by G.\,K.\ Pedersen
\cite[Thm.\,35]{Pedersen}, using the notion of exterior equivalence 
of actions of a (locally compact) group on a $C^*$-algebra.

\begin{theorem}\label{inner}
Suppose that a group $G$ acts on a ring $A$ by inner automorphisms 
$g(a) = x(g)\,a\,x(g)^{-1}$ via a represention $G\map{x} A^\times$.
Then 

\noindent a) 
the twisted algebra $A\rtimes G$ is isomorphic to $A[G]$;

\noindent b)
For every $G$-algebra $C$, 
$(C\oo A)\rtimes G \cong (C\rtimes G)\oo A$.

\noindent c)
If $G$ is a finite group and $A$ is a finite dimensional Banach algebra,
$K^A_G(X)$ is the equivariant K-theory of $A$-bundles
of Example \ref{G-A bundles}(a).
In particular, $K^A_G(X)$ is independent of the representation $x$.
\end{theorem}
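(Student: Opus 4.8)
The plan is to establish part~(a) by writing down an explicit ``untwisting'' isomorphism, to obtain part~(b) as the relative version of the very same construction, and then to deduce part~(c) formally by feeding (b) into Theorem~\ref{K(BxG)}. The whole proof is driven by one idea: since the $G$-action is inner, conjugating each group generator by the corresponding unit $x(g)$ trivializes the twisting, and the only input needed is that $x$ is a homomorphism.

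For (a), recall that $A\rtimes G$ is free as a left $A$-module on symbols $g$ ($g\in G$), subject to $g\cdot a=g(a)\cdot g=x(g)\,a\,x(g)^{-1}\cdot g$, whereas in the ordinary group ring $A[G]$ the symbols commute with $A$. I would define the $A$-linear map $\psi\colon A\rtimes G\to A[G]$ on the basis by $\psi(g)=x(g)\,g$, so that $\psi(a\,g)=a\,x(g)\,g$; since each $x(g)$ is a unit, $\psi$ is a bijection with inverse $c\,g\mapsto c\,x(g)^{-1}\,g$. The only point to verify is multiplicativity, and the key computation is
\[
\psi\bigl((a\,g)(b\,h)\bigr)=\psi\bigl(a\,x(g)\,b\,x(g)^{-1}(gh)\bigr)=a\,x(g)\,b\,x(g)^{-1}x(gh)(gh)=a\,x(g)\,b\,x(h)(gh),
\]
which matches $\psi(a\,g)\,\psi(b\,h)=a\,x(g)\,b\,x(h)(gh)$ computed in $A[G]$. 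Here the identity $x(gh)=x(g)x(h)$ is exactly what makes the conjugating factors telescope.

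For (b), I would run the identical argument one tensor slot over. Using the diagonal action $g(c\oo a)=g(c)\oo x(g)\,a\,x(g)^{-1}$ and writing the twisted generators of both $(C\oo A)\rtimes G$ and $C\rtimes G$ as $g$, define $\Phi\colon (C\oo A)\rtimes G\to (C\rtimes G)\oo A$ on generators by $\Phi\bigl((c\oo a)\,g\bigr)=(c\,g)\oo\bigl(a\,x(g)\bigr)$. The same cancellation as in (a) shows $\Phi$ is a ring homomorphism, and it is bijective with inverse $(c\,g)\oo a\mapsto\bigl(c\oo a\,x(g)^{-1}\bigr)\,g$. Since (a) is the special case $C=\R$ with trivial $G$-action, in the write-up I may simply prove (b) and read off (a).

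For (c), since $G$ is finite and $\cA=X\times A$ is trivial, the Remark following Theorem~\ref{K(BxG)} identifies $B=\Gamma(\cA)$ with $C(X)\oo A$, equipped with the diagonal $G$-action (through $X$ on $C(X)$ and through $x$ on $A$). Theorem~\ref{K(BxG)} gives $K^A_G(X)\cong K_0\bigl((C(X)\oo A)\rtimes G\bigr)$, and applying (b) with $C=C(X)$ yields $K^A_G(X)\cong K_0\bigl((C(X)\rtimes G)\oo A\bigr)$. When instead $G$ acts trivially on $A$ the crossed product only twists the $C(X)$ factor, so the equivariant $K$-theory of $A$-bundles of Example~\ref{G-A bundles}(a) is $K_0\bigl((C(X)\rtimes G)\oo A\bigr)$ as well; comparing the two expressions proves (c) and exhibits independence from $x$. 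I expect no serious obstacle: the entire content is the multiplicativity check in (a)/(b), which is pure bookkeeping once one tracks the twisted multiplication, and the finiteness of $G$ removes any analytic subtlety in passing between crossed products and group rings.
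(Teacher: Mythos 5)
Your proposal is correct and follows essentially the same route as the paper: an explicit untwisting isomorphism built from the units $x(g)$ (your $\psi(g)=x(g)\,g$ is precisely the inverse of the paper's map $g\mapsto g\cdot x(g)^{-1}$), its relative version for part (b), and then Theorem \ref{K(BxG)} to deduce (c). The multiplicativity check, where $x(gh)=x(g)x(h)$ makes the conjugating factors cancel, is the same key computation as in the paper.
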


\begin{proof}
Fix $g\in G$ and set $x=x(g^{-1})=x(g)^{-1}$, $y(g)=g\cdot x$.
The element $y(g)$ of $A\rtimes G$ commutes with every element of $A$: 
\[
y(g)\cdot a = gx\cdot a\cdot x^{-1}x = g\,(g^{-1}a g)x = a\,y(g).
\]
The $A$-module map $A[G]\to A\rtimes G$ sending $g$ 
to $y(g)$ is multiplicative:
\[
y(g)\cdot y(h) = g\cdot x(g)^{-1}y(h) = gy(h)x(g)^{-1}
= g\, h\cdot x(h)^{-1}x(g)^{-1} = y(gh).  
\]
Thus $y$ defines a ring isomorphism $A[G]\map{\cong} A\rtimes G$. 
This proves (a).

For (b), the map $(C\rtimes G)\oo A\to (C\oo A)\rtimes G$
sending $cg\oo a$ to $(c\oo a)y(g)$ is an isomorphism,
because $y(g)c=g(c)y(g)$ in $(C\oo A)\rtimes G$.

When $G$ is finite and $\dim(A)<\infty$, Theorem \ref{K(BxG)}
and part (b) imply that 
$K^A_G(X)=K_0((C(X)\rtimes G)\oo A)$.
That is, every \GA bundle (with $\cA$ trivial)
has the form $E\oo_A A[G]$ for an $A$-bundle $E$, and
$K^A_G(X)$ is isomorphic to the usual equivariant $K$-theory
of $A$-bundles on $X$.
\end{proof}

\begin{example}\label{ex:K_G}
Suppose that $A$ is a finite dimensional simple $\R$-algebra and that
$G$ acts as the identity on the center of $A$;
This is the case for example if the center is $\R$.
By the Noether-Skolem theorem, every automorphism on $A$ is inner,
i.e., conjugation by an element $x$. If the action lifts to a
representation $G\to A^\times$, as in Theorem \ref{inner},
then $K^A_G(X)$ is either $KO_{G}(X),KU_{G}(X)$ or 
$KSp_{G}(X)$, depending on whether $A$ is a matrix ring 
over $\R$, $\C$ or $\H$.
\end{example}

\begin{remm}
The assumption that $x(g)x(h)=x(gh)$ is needed in Theorem \ref{inner}. 
For instance, when $G=\{1,\tau\}$ is acting trivially on $X$,
with $A=\H$ and $x(\tau)=i$, we show in Example \ref{quaternionic} below
that $K_G^{\H}(X)$ is isomorphic to $KU(X)$ and not $KSp(X)$.
\end{remm}

\medskip\goodbreak
\section{Real $A$-bundles}

We now restrict to the case when $G$ is the cyclic group $\{1,\tau\}$ 
of order~2, that $A$ is a Banach $\R$-algebra, and $\cA$ is an
algebra bundle on $X$ with fiber $A$. If there is an 
involution $\tau$ on $\cA$ which restricts to algebra isomorphisms
$\cA_x\map{\simeq}\cA_{\bar x}$, we call $\cA$ a
{\it Real algebra bundle}. (This is a special case of
$G$ acting on $\cA$ in the sense of the previous section.)

We use the term ``Real $\cA$-bundle'' for a \GA bundle when $G=\{1,\tau\}$.
Here is a paraphrase of Definition \ref{def:GA} in this setting.


\begin{defn}
Suppose that $X$ is a space with involution $\tau$, and 
$\cA$ is a Real algebra bundle on $X$. 
By a {\it Real $\cA$-bundle} on $X$ we mean an $\cA$-bundle $E$ 
together with an involution $\tau:E\to E$ which sends $E_x$ to 
$E_{\bar x}$ and which is twisted $\cA$-linear in the sense that 
$\tau(a\cdot e)=\overline{a}\tau(e)$ for $a\in \cA_x$, $e\in E_x$.
A morphism $\phi:E\to F$ of Real $\cA$-bundles is a morphism of the
underlying $\cA$-bundles commuting with the involution ($\phi\tau=\tau\phi$).

Real $\cA$-bundles form an additive category under Whitney direct sum 
of bundles, and we write $KR^{\cA}(X)$ for its Grothendieck group.
This group is contravariant in $X$ and covariant in $\cA$: 
given $\cA\to \cA'$, the functor $E\mapsto \cA'\oo_{\cA}E$ defines a map
$KR^{\cA}(X)\to KR^{\cA'}(X)$.
Forgetting the involution yields a functor $KR^{\cA}(X)\to K^{\cA}(X)$.
\end{defn}

\begin{example}
If $A$ is equipped with an algebra involution $a\mapsto \bar{a}$,
and $\cA$ is the trivial algebra bundle $X\times A$ with
$\tau(x,a)=(\bar x,\bar a)$, we call $\cA$
a {\it trivial Real algebra bundle}, and use the term
{\it Real $A$-bundle} for a {\it Real $\cA$-bundle}.
Unless stated otherwise, every algebra bundle in the rest of
this section will be a trivial Real algebra bundle.

For example, when the involution on $A$ is trivial, a Real $A$-bundle
on $X$ is just an $A$-linear $G$-bundle, where $G=\{1,\tau\}$. 
As pointed out in Example \ref{G-A bundles}(a), $KR^A(X)$
is $KO_G(X)$, $KU_G(X)$ or $KSp_G(X)$ when $A$ is
a matrix algebra over $\R$, $\C$ or $\H$, respectively.

When $A=\C$ and the involution is complex conjugation,
a Real $A$-bundle is a Real vector bundle in Atiyah's sense
\cite{A}. As pointed out in Example \ref{G-A bundles}(b), 
$KR^A(X)$ is Atiyah's $KR(X)$.
This example motivates our choice to adopt Atiyah's notation $KR$
from \cite{A}.
\end{example}

\begin{variant}\label{A[G]}
Suppose that $A^0$ is a Banach algebra and $A\!=\!A^0[G]$ (with 
$\overline{a+b\tau}=a-b\tau$). 
If $G$ acts trivially on $X$, the category of Real $A$-bundles 
is equivalent to the category of $A^0$-bundles, so
$KR^A(X)\cong K^{A^0}(X)$.
\end{variant}


\begin{lemma}\label{GA-mod}
Fix $A$ and $X$ as above.
There is a faithful functor $\cE$ from the category of 
finitely generated projective left $A\rtimes G$-modules to the category of 
Real $A$-bundles whose underlying $A$-bundle is trivial.
\end{lemma}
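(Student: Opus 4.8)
The plan is to construct $\cE$ explicitly and check that it is well defined and faithful; fullness is neither needed nor expected, since a morphism of Real $A$-bundles $X\times P\to X\times Q$ may vary with $x\in X$, whereas $\cE$ will produce only the constant ones.

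First I would translate the source category into concrete data. Since $G=\{1,\tau\}$, the identification recalled in part~(e) preceding this lemma presents a left $A\rtimes G$-module $P$ as a left $A$-module equipped with an involution $\sigma$ (the action of $\tau$) satisfying the intertwining relation $\sigma(a\cdot p)=\bar a\,\sigma(p)$. I also need $P$ to be finitely generated projective as an $A$-module, not merely over $A\rtimes G$. This follows because $A\rtimes G=A\oplus A\tau$ is free of rank~$2$ as a left $A$-module, so restriction of scalars along $A\hookrightarrow A\rtimes G$ sends a summand of $(A\rtimes G)^n$ to a summand of $A^{2n}$. Hence $X\times P$ is a genuine (and trivial) $A$-bundle on $X$, as required by the statement.

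Next I would define the functor. On objects, set $\cE(P)$ to be the trivial $A$-bundle $X\times P$ with involution $\tau_E(x,p)=(\bar x,\sigma(p))$. The three defining properties of a Real $A$-bundle from Definition \ref{def:GA} are then immediate: $\tau_E$ carries the fiber over $x$ to the fiber over $\bar x$; it is an involution since $\sigma^2=\id$ and $\bar{\bar x}=x$; and it is twisted $A$-linear because $\tau_E(a\cdot(x,p))=(\bar x,\sigma(ap))=(\bar x,\bar a\,\sigma(p))=\bar a\cdot\tau_E(x,p)$, which is exactly the intertwining relation. On a morphism $f\colon P\to Q$ of $A\rtimes G$-modules, i.e.\ an $A$-linear map with $f\sigma_P=\sigma_Q f$, I set $\cE(f)=\id_X\times f$; because $f$ commutes with the $\sigma$'s, the bundle map $\cE(f)$ commutes with the involutions $\tau_E$, so it is a morphism of Real $A$-bundles. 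Functoriality is clear from $\id_X\times(gf)=(\id_X\times g)(\id_X\times f)$.

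Faithfulness is then the final step: restricting $\cE(f)$ to any single fiber recovers $f$, so $\cE(f)=\cE(g)$ forces $f=g$. I expect the only point requiring genuine care to be the projectivity claim of the first step, namely confirming that the underlying $A$-module of $P$ is finitely generated projective so that $X\times P$ is an admissible $A$-bundle; the remaining verifications are formal consequences of the conjugation conventions.
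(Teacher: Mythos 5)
Your construction is exactly the paper's: $\cE(P)=X\times P$ with involution $(x,p)\mapsto(\bar x,\sigma(p))$ and $\cE(f)=\id_X\times f$. The paper's proof simply asserts projectivity over $A$ and omits the faithfulness check, both of which you correctly supply (the rank-two freeness of $A\rtimes G$ over $A$, and restriction to a fiber), so your write-up is a more complete version of the same argument.
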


\begin{proof}
Suppose that $M$ is a finitely generated projective left $A\rtimes G$-module.
Then $M$ is a finitely generated projective left $A$-module, 
endowed with an involution $\tau\cdot m =\overline{m}$ such that 
$\overline{a.m}=\overline{a}\cdot\overline{m}$. 
Then $\cE(M)=X\times M$ is trivial as an $A$-bundle, and the involution
$\tau(x,m)=(\bar{x},\bar{m})$ makes it a Real $A$-bundle.
As $\cE(M)$ is natural in $M$, $\cE$ is a functor.
\end{proof}

\begin{remm}
Set $B\cong C(X)\oo A$, where
$C(X)$ is the ring of continuous functions on $X$.
By Theorem \ref{K(BxG)}, the category of Real $A$-bundles
on $X$ is equivalent to the category of finitely generated projective
modules over $B\rtimes G$. In particular, $KR^A(X)\cong K_0(B\rtimes G)$.
\end{remm}

The Real $A$-bundle $\cE(A\!\rtimes\!G)$ is 
$X\!\times\!(A\!\rtimes\!G)$, endowed with the involution 
$\tau(x,a+b\tau)=(\bar{x},\bar{a}+\bar{b}\tau)$.
Given a morphism $\phi:\cE(A\!\rtimes\!G)\to E$ of Real $A$-bundles,
$e(x)=\phi(x,1)$ is a section of $E$. We immediately obtain:

\begin{corollary}\label{free}
Given a Real $A$-bundle $E$, a section $e$ of the underlying 
bundle uniquely determines a morphism $\phi:\cE(A\!\rtimes\!G)\to E$
of Real $A$-bundles,  by the formula 
$(x,a+b\tau) \mapsto (x, ae_x + b \bar{e}_{\bar x})$.
\end{corollary}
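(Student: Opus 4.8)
The plan is to show that the assignment $e\mapsto\phi$ defined by the displayed formula is a two-sided inverse of the map $\phi\mapsto(x\mapsto\phi(x,1))$ recorded just above, by reducing every morphism to its values on the two left $A$-module generators $1$ and $\tau$ of $A\rtimes G$.

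First I would observe that, as a left $A$-module, $A\rtimes G=A\cdot 1\oplus A\cdot\tau$ is free on $\{1,\tau\}$, so on fibers $(x,a+b\tau)=a\cdot(x,1)+b\cdot(x,\tau)$. Since a morphism $\phi$ of the underlying trivial $A$-bundles is fiberwise $A$-linear, it is determined by the two sections $x\mapsto\phi(x,1)$ and $x\mapsto\phi(x,\tau)$; the first is the section $e$ associated to $\phi$ above, namely $e_x=\phi(x,1)$. The second is then forced by equivariance $\phi\tau=\tau\phi$. By Lemma \ref{GA-mod} the involution on $\cE(A\rtimes G)$ sends $(x,m)$ to $(\bar x,\tau\cdot m)$, and in $A\rtimes G$ we have $\tau\cdot 1=\tau$; hence $\tau_{\cE}(\bar x,1)=(x,\tau)$. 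Applying $\phi$ and using equivariance,
$$\phi(x,\tau)=\phi\bigl(\tau_{\cE}(\bar x,1)\bigr)=\tau_E\bigl(\phi(\bar x,1)\bigr)=\tau_E(e_{\bar x}),$$
which lies in $E_x$ since $\tau_E$ carries $E_{\bar x}$ to $E_x$. Writing $\bar e_{\bar x}:=\tau_E(e_{\bar x})$ and combining with $A$-linearity yields $\phi(x,a+b\tau)=(x,ae_x+b\bar e_{\bar x})$, which proves both the formula and the uniqueness of $\phi$.

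For the converse I would start from an arbitrary section $e$, define $\phi$ by the formula, and check that it is a morphism of Real $A$-bundles. Continuity and the fact that each fiber value $ae_x+b\bar e_{\bar x}$ lies in $E_x$ follow from the computation above, and fiberwise $A$-linearity is immediate because the module action on $A\rtimes G$ is by left multiplication and leaves the coefficients $e_x,\bar e_{\bar x}$ fixed. The one delicate step, which I expect to be the main obstacle, is verifying $\phi\tau=\tau\phi$: one expands both $\phi\bigl(\tau_{\cE}(x,a+b\tau)\bigr)$ and $\tau_E\bigl(\phi(x,a+b\tau)\bigr)$ and matches them term by term, the crux being the twisted linearity $\tau_E(a\cdot v)=\bar a\,\tau_E(v)$ together with $\tau_E^2=\id$ and $\overline{\bar x}=x$, which interchange the $a$- and $b$-contributions with the correct conjugations. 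The care needed is entirely in tracking which fiber each term lives in and applying the two involutions in the right order; once this bookkeeping is done the two sides coincide, establishing that $\phi$ is a Real $A$-bundle morphism.
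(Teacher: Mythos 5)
Your proof is correct and follows the same route the paper has in mind: the paper offers no separate argument, treating the corollary as immediate from the remark that $e(x)=\phi(x,1)$ recovers a section, which is exactly your observation that $A\rtimes G$ is free as a left $A$-module on $\{1,\tau\}$ and that equivariance forces the value on $\tau$. A small point in your favor: your computation $\tau_{\cE}(\bar x,1)=(x,\tau)$, derived from Lemma \ref{GA-mod} via $\tau\cdot(a+b\tau)=\bar b+\bar a\tau$, is the version of the involution on $\cE(A\rtimes G)$ that is actually consistent with the corollary's formula (the display just before the corollary appears to have the two coordinates unswapped).
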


The universal property in Corollary \ref{free} 
justifies the terminology that Real $A$-bundles can be free.

\begin{defn}
We say that a Real $A$-bundle is {\it free} if it is a 
direct sum of copies of $\cE(A\rtimes G)$, i.e., $\cE(F)$
for a free $A\!\rtimes\!G$-module $F$.
\end{defn}

\begin{lemma}\label{summand free}
If $X$ is compact, any Real $A$-bundle $E$ 
is a direct summand of a free Real $A$-bundle.
\end{lemma}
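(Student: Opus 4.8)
The plan is to exhibit $E$ as an equivariant quotient of an explicit free Real $A$-bundle and then split the quotient map inside the Real category. The two ingredients are Corollary~\ref{free}, which converts a single section of $E$ into a morphism of Real $A$-bundles $\cE(A\rtimes G)\to E$, and Lemma~\ref{splitting}, which provides an involution-compatible splitting of any surjection of Real $A$-bundles. Granting these, the problem reduces to finding finitely many global sections of $E$ whose values generate each fiber as an $A$-module.

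First I would produce such sections. The proof of Theorem~\ref{summand}, applied with $G=\{1,\tau\}$, supplies finitely many global sections $e_1,\dots,e_n$ of $E$ whose values $e_i(x)$ generate $E_x$ over $A$ for every $x$, using the compactness of $X$. For each $i$, Corollary~\ref{free} then gives a morphism of Real $A$-bundles $\phi_i\colon\cE(A\rtimes G)\to E$ with $\phi_i(x,1)=e_i(x)$, and assembling these produces a morphism of Real $A$-bundles
\[
\Phi=(\phi_1,\dots,\phi_n)\colon \cE(A\rtimes G)^{\oplus n}\longrightarrow E .
\]
By definition $\cE(A\rtimes G)^{\oplus n}=\cE\bigl((A\rtimes G)^n\bigr)$ is free. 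Since the image of $\phi_i$ over $x$ contains $A\,e_i(x)$ (take the elements $(x,a)$), the image of $\Phi$ over $x$ is all of $E_x$, so $\Phi$ is a surjection of Real $A$-bundles.

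It then remains to split $\Phi$ compatibly with the involution. This is exactly Lemma~\ref{splitting}: its kernel is a Real $A$-bundle and the sequence $0\to\ker\Phi\to\cE(A\rtimes G)^{\oplus n}\to E\to 0$ is split in the category of Real $A$-bundles. Consequently $E$ is a direct summand of the free Real $A$-bundle $\cE\bigl((A\rtimes G)^n\bigr)$, which is the assertion.

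I expect the only genuine subtlety to be the word \emph{Real} throughout: a bare $A$-linear splitting of $\Phi$ need not commute with $\tau$. That difficulty is absorbed into Lemma~\ref{splitting}, where an arbitrary $\cA$-module splitting is averaged over the two-element group to make it equivariant; everything else is a formal consequence of Corollary~\ref{free} and Theorem~\ref{summand}. One could instead bypass Corollary~\ref{free} by checking directly that $\cA\oo\R[G]\cong\cE(A\rtimes G)$ and invoking Maschke's theorem to write $M$ as a summand of a free $\R[G]$-module, but the section-based route is more transparent.
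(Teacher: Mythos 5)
Your proof is correct: the generating sections exist by the compactness argument, Corollary~\ref{free} turns them into a surjection $\Phi\colon\cE\bigl((A\rtimes G)^n\bigr)\to E$ of Real $A$-bundles, and Lemma~\ref{splitting} splits it equivariantly, exhibiting $E$ as a summand of a free bundle. The paper instead disposes of the lemma in one line: it quotes the \emph{statement} of Theorem~\ref{summand} to write $E$ as a summand of $\cA\oo M$ for a finite-dimensional $G$-module $M$, and then identifies $X\times(A\oo\R[G]^n)$ with $\cE(A\rtimes G)^n$ --- which tacitly uses that $M$ is itself a summand of $\R[G]^n$ (semisimplicity of $\R[G]$, i.e.\ Maschke, exactly the alternative you sketch in your last paragraph). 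Your route unpacks the same underlying mechanism (generating sections, then an averaged splitting) but builds the free cover directly, so it bypasses both the Peter--Weyl step of Theorem~\ref{summand} (you never need the sections to span a finite-dimensional $G$-invariant subspace, since Corollary~\ref{free} symmetrizes each section for you) and the implicit embedding $M\hookrightarrow\R[G]^n$. The paper's version is shorter given what is already proved; yours is more self-contained and makes the free resolution explicit, which is arguably closer in spirit to how Corollary~\ref{free} is meant to be used.
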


\begin{proof}
By Theorem \ref{summand}, $E$ is a summand of a Real bundle of the form
$X\times(A\oo \R[G]^n)=\cE(A\rtimes G)^n$.
\end{proof}

\begin{example}\label{ex:RO(G)}
The ring $A\rtimes G$ has two orthogonal idempotents,
$e_+=(1+\tau)/2$ and $e_-=(1-\tau)/2$. Both $Ae_{+}$ and $Ae_-$
are left ideals of $A\rtimes G$, and $A\rtimes G=Ae_+\oplus Ae_-$.
Thus both Real $A$-bundles $\cE(Ae_{+})$ and $\cE(Ae_{-})$ have 
$X\times A$ as their underlying $A$-bundle, and
\[ \cE(A\rtimes G)=\cE(Ae_{+})\oplus\cE(Ae_{-}). \]
The bundle $\cE(Ae_{+})$ has the usual involution 
$\tau(x,a)=(\bar{x},\bar{a})$, while the bundle $\cE(Ae_{-})$ has 
the involution $\tau(x,a)=(\bar{x},-\bar{a})$.

We will write $A_\sigma$ for the left $A\rtimes G$-module $Ae_{-}$,
i.e., the left $A$-module $A$ with the involution $\tau(a)= -\bar{a}$.
Alternatively, $A_\sigma$ is the Real $A$-module $A\oo\R_\sigma$,
where $\R_\sigma$ is the sign representation of $G$. 
\end{example}


%

In the rest of this section, we identify $KR^A(X)$ in some special cases.
Recall that the cyclic group $G=\{1,\tau\}$
acts on a Banach algebra $A$.

\smallskip\goodbreak
\begin{center}{\bf Finite simple algebras}\end{center}
\smallskip

We will be primarily interested in Real $A$-bundles when $A$ is a
finite dimensional simple $\R$-algebra. 
Studying Real $A$-bundles for any finite dimensional
{\it semisimple} $\R$-algebra $A$ does not yield more generality.
Since any finite dimensional semisimple $\R$-algebra with involution $A$
is a product of simple algebras with involution $A_i$ and algebras
$A_j[G]$, every Real $A$-bundle is canonically a product of Real
$A_i$-bundles and Real $A_j[G]$-bundles. 
We leave the details to the reader.

\begin{lemma}\label{central simple}
If $A$ is a central simple $\C$-algebra, 
and the involution is trivial on $\C$,
then $KR^A(X)\cong KU_G(X)$.
\end{lemma}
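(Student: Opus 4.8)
The plan is to reduce the statement to Theorem \ref{inner}(c) by exhibiting the involution on $A$ as conjugation by an honest order-two unit. First I would invoke Wedderburn: since $A$ is central simple over the algebraically closed field $\C$, we have $A\cong M_n(\C)$, with center $\C$. The hypothesis that the involution $\tau$ is trivial on $\C$ says precisely that $\tau\colon A\to A$ is a $\C$-algebra automorphism (for $\lambda\in\C=Z(A)$ we get $\tau(\lambda a)=\lambda\,\tau(a)$) satisfying $\tau^2=\id$. By the Noether--Skolem theorem every such automorphism is inner, so $\tau(a)=x_0\,a\,x_0^{-1}$ for some $x_0\in A^\times=\GL_n(\C)$.

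Next I would upgrade $x_0$ to a genuine representation $x\colon G\to A^\times$ of $G=\{1,\tau\}$. Since $\tau^2=\id$, conjugation by $x_0^2$ is trivial, so $x_0^2$ lies in the center $\C^\times$; writing $x_0^2=\lambda$ and rescaling $x=\mu x_0$ with $\mu^2=\lambda^{-1}$ (possible because $\C$ is algebraically closed) yields $x^2=1$ while $x\,a\,x^{-1}$ still equals $\tau(a)$. Thus $x(1)=1$, $x(\tau)=x$ defines a representation $G\to A^\times$ with $x(g)x(h)=x(gh)$, exactly the data required by Theorem \ref{inner}.

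With this in hand the conclusion is formal. A Real $A$-bundle is by definition a $G$--$\cA$ bundle for the trivial algebra bundle $\cA=X\times A$, with $G=\{1,\tau\}$ acting on $X$ by the involution and on $A$ by the inner automorphism $\tau$, so $KR^A(X)=K^A_G(X)$. Theorem \ref{inner}(c) then identifies $K^A_G(X)$ with the equivariant $K$-theory of $A$-bundles on which $G$ acts trivially on $A$, and by Example \ref{G-A bundles}(a) the latter is $KU_G(X)$ since $A=M_n(\C)$. This gives $KR^A(X)\cong KU_G(X)$.

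The one genuinely delicate point---and the step I expect to be the crux---is the rescaling that produces $x^2=1$: it is available only because the center is the algebraically closed field $\C$. Over a center equal to $\R$ this can fail, which is exactly the phenomenon behind the Remark following Example \ref{ex:K_G}, where $A=\H$ and $x(\tau)=i$ forces $x^2=-1$ and the theory becomes genuinely twisted (yielding $KU$ rather than $KSp$). I would therefore flag the Noether--Skolem-plus-square-root argument as the place where the hypothesis ``central simple over $\C$'' is essential, the remaining identifications being direct consequences of Theorem \ref{inner} and Example \ref{G-A bundles}(a).
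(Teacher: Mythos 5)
Your proof is correct and follows essentially the same route as the paper: Noether--Skolem gives an inner automorphism, the square of the implementing unit is central, and a complex square root rescales it to an honest order-two representation of $G$, after which Theorem \ref{inner} applies. The only (cosmetic) difference is that you cite part (c) of Theorem \ref{inner} where the paper cites part (b); part (c) is indeed the statement that delivers the $K$-theoretic conclusion, so your citation is if anything the more accurate one.
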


\begin{proof}
By Noether-Skolem, $\bar{a}=xax^{-1}$ for 
some $x\in A^\times$ with $x^2\in\C$.
Let $c\in\C$ be a square root of $x^2$; then 
$\bar{a}=(x/c)a(x/c)^{-1}$ and $(x/c)^2=1$.
The result now follows from Theorem \ref{inner}(b).
\end{proof}

\begin{example}\label{ex:C11}
Let $A$ be the algebra $M_{2}(\R)$ with the involution defined by
conjugation by the diagonal matrix $j=(1,-1)$.
That is, $A$ is the Clifford algebra $C^{1,1}$ with
$i=\left(\atop01\atop{-1}0\right)$ satisfying $i^2=-1$ and 
$\bar{i}=-i$, while
$k=\left(\atop01\atop10\right)$ satisfyies $k^2=+1$ and
$\bar{k}=-k$.
Since $j^2=1$, Theorem \ref{inner}(b) yields 
$KR^A(X)\cong KO_G(X)$.
\end{example}

\begin{lemma}\label{ss}
If $A$ is a finite simple $\R$-algebra, 
$A\rtimes G$ is a semisimple $\R$-algebra.
\end{lemma}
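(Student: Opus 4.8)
The plan is to show that $A \rtimes G$ is semisimple by exhibiting it as a finite dimensional algebra with no nonzero nilpotent two-sided ideals, or equivalently by verifying that it decomposes as a product of matrix algebras over division rings. The key structural fact is that $G = \{1,\tau\}$ has order~2, so $A \rtimes G$ is a free left $A$-module of rank~2, with $\R$-dimension twice that of $A$; in particular it is finite dimensional over $\R$, so it suffices to prove that its Jacobson radical vanishes. The natural strategy is to split into cases according to how the involution $\tau$ acts on $A$, using the classification of finite simple $\R$-algebras: $A$ is a matrix ring over $\R$, $\C$, or $\H$.

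First I would reduce the problem using Theorem~\ref{inner}(a). When the involution $\tau$ acts on $A$ by an \emph{inner} automorphism implemented by an element $x$ with $x^2$ central (as arises, for instance, in the situations of Lemmas~\ref{central simple} and Example~\ref{ex:C11}, after rescaling so that $x^2 = 1$), Theorem~\ref{inner}(a) gives $A \rtimes G \cong A[G] = A[\Z/2]$. Since $A$ is semisimple and $\R[\Z/2] \cong \R \times \R$ is semisimple (2 being invertible), the tensor-type decomposition shows $A[\Z/2]$ is a product of semisimple algebras, hence semisimple. This handles all cases where the involution is realized by an inner automorphism squaring to a central unit.

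The remaining cases are those where $\tau$ acts on the center nontrivially. By Noether--Skolem, any automorphism fixing the center pointwise is inner, so the genuinely new situation is when $\tau$ restricts to a nontrivial automorphism of the center $Z(A)$. For $A$ a matrix ring over $\C$ this means $\tau$ induces complex conjugation on $\C$; for $A$ a matrix ring over $\R$ or $\H$ the center is $\R$ and $\tau$ is forced to fix it, returning us to the inner case. So the one essential new case is $Z(A) = \C$ with $\tau$ acting as conjugation. Here I would argue directly: the idempotents $e_\pm = (1 \pm \tau)/2$ of Example~\ref{ex:RO(G)} decompose $A \rtimes G = (A\rtimes G)e_+ \oplus (A \rtimes G)e_-$ as a direct sum of left ideals, and I would identify the corresponding corner rings $e_\pm (A\rtimes G) e_\pm$ with known semisimple algebras. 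An alternative, cleaner approach treats $A \rtimes G$ as a crossed product of the simple algebra $A$ by the Galois group $\{1,\tau\}$ acting on $Z(A) = \C$ over the fixed field $\R$; standard crossed-product theory then identifies $A \rtimes G$ as a central simple algebra over $\R$, which is in particular semisimple.

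The main obstacle I anticipate is the center-flipping case: one must verify that $A \rtimes G$ has trivial radical when $\tau$ is \emph{not} inner, since Theorem~\ref{inner} does not apply. The most robust route is to invoke the general theorem that a crossed product $A \rtimes G$ of a simple ring $A$ with a finite group $G$ acting by automorphisms (with the induced action on the center being faithful modulo inner parts) is semisimple --- essentially Maschke's theorem for crossed products, valid because $|G| = 2$ is invertible in $A$. I would cite this crossed-product version of Maschke's theorem and note that in every case $|G|$ is invertible, so the averaging operator $e \mapsto \tfrac12(e + \tau e \tau^{-1})$ splits off any ideal, forcing the radical to be zero. This uniform argument sidesteps the case analysis entirely, at the cost of quoting the noncommutative Maschke theorem.
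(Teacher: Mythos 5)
Your case analysis has a genuine gap in the ``inner'' branch. When $\tau(a)=xax^{-1}$ with $\tau^2=\id$, the element $x^2$ is central, but over $\R$ it need not be a \emph{square} in the center, and $x$ is only determined up to a central scalar $\lambda$, which changes $x^2$ by $\lambda^2>0$. So the sign of $x^2$ is an invariant of the action and the rescaling to $x^2=1$ is not always possible; Theorem \ref{inner}(a) requires an actual representation $G\to A^\times$, i.e.\ $x^2=1$. The paper's remark following Example \ref{ex:K_G} is precisely about this failure: for $A=\H$ with $\tau$ equal to conjugation by $i$, every implementing element has negative square, and indeed $A\rtimes G\cong M_2(\C)$ (Example \ref{ex:AG}), which is not $\H[\Z/2]\cong\H\times\H$. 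Hence your claim that the inner case is always reduced to $A[G]$, and your conclusion that the only remaining case is $Z(A)=\C$ with $\tau$ acting by complex conjugation, are both false: the inner case with $x^2<0$ (e.g.\ $\H$ with conjugation by $i$, or $M_2(\R)$ with conjugation by a complex structure, as in Example \ref{Clifford}) is left uncovered by that route. (The rescaling trick does work in Lemma \ref{central simple} only because every element of $\C^\times$ has a square root.)

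Your final ``uniform argument,'' however, is correct and complete on its own: for $|G|$ invertible and $A$ semisimple, averaging an $A$-linear projection $p$ onto a submodule via $\frac{1}{|G|}\sum_g gpg^{-1}$ produces an $A\rtimes G$-linear projection, so every submodule of an $A\rtimes G$-module is a direct summand and $A\rtimes G$ is semisimple. (The parenthetical condition about the action on the center being faithful modulo inner automorphisms is not needed for semisimplicity; it would only matter if you wanted simplicity.) This Maschke-type argument is more general than what is required and sidesteps the flawed case analysis, at the price of quoting the crossed-product Maschke theorem. The paper's own proof is different and more elementary: it takes a minimal left ideal $V$ of $A$, notes via Schur's lemma that $\tau(V)$ equals $V$ or meets it trivially, and shows in either case that $V+\tau(V)$ is a sum of simple $A\rtimes G$-modules; summing over a decomposition of $A$ into minimal left ideals then exhibits semisimplicity directly. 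If you rely solely on your last paragraph, your proof stands; if you rely on the case analysis, it does not.
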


\begin{proof}
If $V$ is a minimal left ideal of $A$, $\tau(V)$ is either $V$
or disjoint from $V$ by Schur's Lemma. In the latter case, consider
the $A\rtimes G$-modules $M=V\oplus\tau(V)$; either it is simple or
has the form $W\oplus W'$ where $W=\tau(W)$ and $W'=\tau(W')$.
Since $A$ is a direct sum of minimal left ideals,
this shows that $A$ is a direct sum of simple left 
$A\rtimes G$-modules, and hence that $A\rtimes G$ is semisimple.
\end{proof}

\begin{examples}\label{ex:AG}
If $A=\C$ and the involution is complex conjugation
(resp., trivial) then $A\rtimes G$ is $M_2(\R)$ (resp., $\C\times\C$).

If $A=\H$ and the involution is conjugation by $i$, then
$A\rtimes G$ is $M_2(\C)$. In this case, we call a Real $\H$-bundle
a {\it Real quaternionic bundle} on $X$; these came up in \cite{KWpre}.
We will show in Example \ref{quaternionic} below
that if $G$ acts trivially on $X$ then
the group $KR^A(X)$ is $KU(X)$.
\end{examples}



\smallskip\goodbreak
\begin{center}{\bf Balanced algebras}\end{center}
\smallskip

Another family of examples comes from the observation that every
$\R$-algebra $A$ with an algebra involution $\tau$ is a $\Z/2$-graded
algebra, with $A_0=A^\tau$ and $A_1 =\{ a\in A:\tau(a)=-a\}$). 

\begin{defn}
We will say that an algebra involution is {\it balanced} 
(or that $A$ is balanced) if there is a unit $u$ in $A$ with
$\bar{u}=-u$. This implies that
there is a left $A_0$-module isomorphism $A_0\cong A_1$, $a\mapsto au$.
\end{defn}

\begin{remm}
If $A$ is simple, and there is a left $A_0$-module isomorphism 
$A_0\cong A_1$ sending $1$ to $u$, then $u$ must be a unit of $A$.
Indeed, for each $b\in A_0$ there is a unique $a\in A_0$ such that
$au=ub$. It follows that $uA_1=uA_0u=A_1u$ and hence that $Au$ is
a 2-sided ideal of $A$. Since $A$ is a simple algebra, we must 
have $Au=A$ and hence $u$ is a unit of $A$.
\end{remm}

The prototype of a balanced involution is the canonical involution 
(induced by $-1$ on $V$) on the Clifford algebra $A=C^{p,q}$ of a 
quadratic form on $V$ of rank $p+q\ge2$ and signature $q-p$. 
As in Example \ref{ex:C11}, $A^0$ may not be simple.

\begin{example}
$A$ cannot be balanced if $\dim(A)$ is odd,
and may not be balanced if $\dim(A)$ is even. For example,
the algebra $A = M_4(\R)$, with the involution defined 
by conjugation with the diagonal matrix $(1,1,1,-1)$, 
is not balanced: $A_0=M_3(\R)\times\R$ and 
$A_1$ is the 6-dimensional subspace spanned by 
$\{e_{4j}, e_{j4}: j<4\}$.
By Theorem \ref{inner}, $KR^A(X)\cong KO_G(X)$.
\end{example}


When $A$ is balanced, the map $a\mapsto au$
defines an isomorphism $A\map{\cong}A_\sigma$ of Real $A$-modules,
where $A_\sigma$ is defined in Example \ref{ex:RO(G)}.


%

\begin{theorem}\label{thm:balanced}
Let $A$ be a Banach algebra with a balanced involution. 
If $X$ is compact, any Real $A$-bundle $E$ on $X$ is a direct summand 
of a Real $A$-bundle of the form 
$X\times A^n$, 
with involution $\overline{(x,a)}=(\bar{x},\bar{a})$.

The Grothendieck group $KR^A(X)$ is isomorphic to $K_0(\Lambda)$,
where $\Lambda$ is the ring of continuous functions $f:X\to A$
satisfying $f(\bar{x})=\overline{f(x)}$.

Finally, if the involution on $X$ is trivial, 
$KR^A(X)$ is the usual Grothendieck group $K^{A_0}(X)$ 
of $A_0$-bundles on $X$.

\end{theorem}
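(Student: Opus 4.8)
The plan is to prove the three assertions in turn, with the balanced hypothesis doing its essential work in the first.

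For the summand statement I would start from Lemma \ref{summand free}, which already exhibits $E$ as a direct summand of a sum of copies of the free bundle $\cE(A\rtimes G)$. By Example \ref{ex:RO(G)} this free bundle splits as $\cE(Ae_+)\oplus\cE(Ae_-)$, where $\cE(Ae_+)=X\times A$ carries the standard involution $\overline{(x,a)}=(\bar x,\bar a)$ and $\cE(Ae_-)=\cE(A_\sigma)$. Here the balanced hypothesis enters through the isomorphism $A\cong A_\sigma$, $a\mapsto au$, noted just before the theorem: it gives $\cE(A_\sigma)\cong X\times A$ with the standard involution as well. Hence $\cE(A\rtimes G)\cong X\times A^2$ with standard involution, and $E$ is a direct summand of $X\times A^{2n}$ for some $n$.

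To identify $KR^A(X)\cong K_0(\Lambda)$ I would run a Serre--Swan argument tailored to these standard free bundles. The key computation is the Real endomorphism ring of $X\times A$: any $\cA$-bundle morphism over $\mathrm{id}_X$ has the form $(x,a)\mapsto(x,a\,c(x))$ for a continuous $c\colon X\to A$, and — using crucially that the involution $a\mapsto\bar a$ is an algebra \emph{automorphism}, so $\overline{a\,c(x)}=\bar a\,\overline{c(x)}$ — commuting with $\tau$ reduces exactly to $c(\bar x)=\overline{c(x)}$, i.e.\ to $c\in\Lambda$. Thus $\operatorname{End}(X\times A)\cong\Lambda^{\op}$, and $\operatorname{End}(X\times A^n)\cong \mathrm{M}_n(\Lambda^{\op})$. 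Sending a Real $A$-bundle $E$ to its module of $\tau$-invariant sections $\{\,s\in\Gamma(E):\tau(s(x))=s(\bar x)\,\}$, with $\Lambda$ acting by $(f\cdot s)(x)=f(x)\,s(x)$, carries $X\times A$ to the free rank-one $\Lambda$-module and realizes every homomorphism between free $\Lambda$-modules; combined with part~(1) this makes the functor an equivalence between Real $A$-bundles and finitely generated projective $\Lambda$-modules, whence $KR^A(X)\cong K_0(\Lambda)$. (Equivalently, the balanced unit $u$ satisfies $u\,e_+=e_-\,u$, so $e_+$ and $e_-$ are conjugate in $B\rtimes G$ and $B\rtimes G\cong\mathrm{M}_2(\Lambda)$; then $K_0(B\rtimes G)\cong K_0(\Lambda)$ via the remark after Lemma \ref{GA-mod}.)

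The final statement I would obtain by specialization: when the involution on $X$ is trivial, $\bar x=x$ forces $\Lambda=\{f\colon X\to A_0\}=C(X)\oo A_0$, and the classical Serre--Swan theorem identifies $K_0(C(X)\oo A_0)$ with the Grothendieck group $K^{A_0}(X)$ of $A_0$-bundles. The only genuinely delicate point in the whole argument is the endomorphism computation underlying part~(2): one must keep track of the fact that the involution on $A$ is an algebra automorphism and not an anti-automorphism, since that is precisely what makes the $\tau$-commuting condition reduce to membership in $\Lambda$ rather than to a central-valued constraint. Once $\Lambda$ is correctly recognized as $\operatorname{End}(X\times A)^{\op}$, the remaining steps are the formal machinery (Lemma \ref{summand free}, the splitting Lemma \ref{splitting}, and Serre--Swan) already established in the excerpt.
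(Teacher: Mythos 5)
Your proof is correct and follows essentially the same route as the paper: the first assertion is obtained exactly as in the paper's proof from Lemma \ref{summand free}, Example \ref{ex:RO(G)} and the balanced isomorphism $A\cong A_\sigma$, and the third by specializing $\Lambda$ to $C(X)\oo A_0$. For the middle assertion the paper simply cites Theorem \ref{K(BxG)} (which gives $KR^A(X)\cong K_0(B\rtimes G)$) and leaves the passage to $K_0(\Lambda)$ implicit, so your explicit identification of $\operatorname{End}(X\times A)$ with $\Lambda^{\op}$ (equivalently, $B\rtimes G\cong M_2(\Lambda)$ via the conjugacy $ue_+u^{-1}=e_-$) is a correct and welcome filling-in of that step rather than a different argument.
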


\begin{proof}
By Lemma \ref{summand free}, any Real $A$-bundle is a summand of
a free Real $A$-bundle; by Lemma \ref{GA-mod} and 
Example \ref{ex:RO(G)}, 
free Real $A$-bundles have the form $X\times(A\times A_\sigma)^n$.
Using the isomorphism $A\map{\cong}A_\sigma$ of Real $A$-modules,
it follows from Theorem \ref{summand} that every Real $A$-bundle is
a summand of a trivial Real $A$-bundle $X\times A^m$.

The assertion about $\Lambda$ comes from Theorem \ref{K(BxG)}.
If the involution is trivial on $X$, $\Lambda$ is the ring of
continuous functions $X\to A^0$.
\end{proof}

\begin{example}[Real quaternionic bundles]\label{quaternionic}
Theorem \ref{thm:balanced} applies to Real quaternion bundles
(see Example \ref{ex:AG}). Indeed,  
conjugation by $i$ is a balanced involution of $A=\H$
($\bar{j}=-j$ and $\bar{k}=-k$).

If $X$ has a trivial involution, then
Theorem \ref{thm:balanced} shows that the category of Real quaternionic 
bundles is equivalent to the category of complex vector bundles 
on $X.$ In particular, $KR^{\H}(X) \cong KU(X)$.

On the other hand, if $X$ is $Y\times\{\pm1\}$ 
 with $\overline{(y,\varepsilon)}=(y,-\varepsilon)$,
the group $KR^{\H}(X)$ is isomorphic to the symplectic $K$-theory $KSp(Y)$.
\end{example}

\begin{remm}
In \cite{Dupont}, J.\,Dupont defined ``symplectic bundles'' which,
in spite of the name, are not related to our constructions. 
His groups $Ksp_n(X)$ are just $KR_{n+4}(X)$.
For instance, when $X$ has a trivial involution, our group 
$KR^{\H}(X)$ is $KU(X)$
but Dupont's group is $KR_4(X) = KO_4(X) = Ksp(X)$.
\end{remm}

\begin{example}\label{Clifford}
Every Clifford algebra $C^{p,q}$ is canonically $\Z/2$-graded, 
and the associated involution is balanced. Thus 
Theorem \ref{thm:balanced} applies when $C^{p,q}$ is a simple algebra.
For example, consider the Clifford algebra $C^{0,2}\cong M_2(\R)$;
its fixed subalgebra is $\C$. As in Example \ref{quaternionic},
if $X$ has a trivial involution, then $KR^{A}(X) \cong KU(X)$.
However, if $X=Y\times\{\pm1\}$  
then $KR^{A}(X)$ is isomorphic to $KO(Y)$.
\end{example}

%
%
%
%

\goodbreak
\section{Grothendieck-Witt groups}\label{sec:GR}

Suppose that an $\R$-algebra $A$ is a $*$-algebra
(i.e., $*\!:\!A\map{\cong} A^\op$ is an algebra isomorphism 
such that $a^{**}=a$ for all $a\in A$), and that $\tau$
is an algebra involution of $A$ commuting with $*$.
If $\cA$ is an algebra bundle on $X$ with fibers $A$,
and $E$ is a Real $\cA$-bundle, then the dual $E^*=\Hom_A(E,A)$ 
is also a Real $\cA$-bundle; if $f\in E^*_x$ then $af\in E^*_x$ is  
defined by $e\mapsto f(e)a^*$, and $(\tau f)(e)=\tau(f(e))$.  

\begin{defn}
An {\it $\eps$-symmetric form} $(\eps=\pm1$)
on a Real $\cA$-bundle $E$
is an isomorphism $\psi:E\map{\cong} E^*$ of Real $\cA$-bundles
such that the  bilinear form
$B(x,y)=\psi(x)y$ satisfies $B(y,x)=\eps B(x,y)^*$.
Note that $B(x,ay)=aB(x,y)$ but $B(ax,y)=B(x,y)a^*$.

The Grothendieck-{\!}Witt group ${}_{\eps}GR^{\cA}(X)$ is defined
to be the Grothendieck group of the category of $\eps$-symmetric 
forms $(E,\psi)$. 
\end{defn}

The groups $GR^{\cA}(X)\!=\!{}_{+1}GR^{\cA}(X)$, 
and to a lesser extent ${}_{-1}GR^{\cA}(X)$, 
are the focus of our recent paper \cite{KWpre}.

\begin{remm}
The definition of Grothendieck-Witt group in \cite{Schlichting}
includes the relation that $[(E,\varphi)]=[H(L)]$ whenever 
$E$ has a {\it Lagrangian}, i.e., a subspace $L$ with $L=L^\perp$.
By Lemma \ref{splitting} and \cite[Lemma 2.9]{Schlichting}, 
this relation is redundant in our topological framework.
\end{remm}

Here is a useful general principle. 

\begin{lemma}\label{self-adjoint}
Given a symmetric form $\psi$ with bilinear form $B$,
and an $\eps$-symmetric form $\varphi$ on $E$, 
$\theta=\psi^{-1}\varphi$ is $\eps$-self-adjoint for $B$, i.e.:
\[ B(\theta x,y) =\eps B(x,\theta y). \]
\end{lemma}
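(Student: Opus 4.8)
The plan is to unwind the definitions and reduce the claimed identity to the defining symmetry properties of $\psi$ and $\varphi$. Recall that $\psi\colon E\to E^*$ and $\varphi\colon E\to E^*$ are both isomorphisms of Real $\cA$-bundles, so $\theta=\psi^{-1}\varphi$ is an endomorphism of $E$. The bilinear form associated to $\psi$ is $B(x,y)=\psi(x)y$, and because $\psi$ is a symmetric ($\eps=+1$) form we have $B(y,x)=B(x,y)^*$. Writing $C(x,y)=\varphi(x)y$ for the bilinear form of $\varphi$, the $\eps$-symmetry of $\varphi$ gives $C(y,x)=\eps\,C(x,y)^*$.

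First I would express everything in terms of these two forms. Since $\theta=\psi^{-1}\varphi$, we have $\psi(\theta x)=\varphi(x)$ for all $x$, and hence
\[
B(\theta x, y)=\psi(\theta x)\,y=\varphi(x)\,y=C(x,y).
\]
This is the key bridge: applying $\theta$ inside the $\psi$-form simply reproduces the $\varphi$-form. The goal is therefore to show $C(x,y)=\eps\,B(x,\theta y)$, and by the same bridge applied in the second slot, $B(x,\theta y)=C(y,x)^*$ after using the symmetry of $B$.

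Next I would assemble the two symmetry relations. Using $B(\theta x,y)=C(x,y)$ together with the symmetry $B(x,y)=B(y,x)^*$ and the two-variable calculation $B(\theta y,x)=C(y,x)$, one computes
\[
B(x,\theta y)=B(\theta y,x)^*=C(y,x)^*=\bigl(\eps\,C(x,y)^*\bigr)^*=\eps\,C(x,y)=\eps\,B(\theta x,y),
\]
where the middle step uses the $\eps$-symmetry of $\varphi$ and the final step uses the bridge again. Rearranging gives exactly $B(\theta x,y)=\eps\,B(x,\theta y)$, which is the assertion.

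The only genuine subtlety will be the careful bookkeeping of the involution $*$ in the symmetry relations — in particular using $a^{**}=a$ when taking the conjugate of $C(y,x)=\eps\,C(x,y)^*$, and being attentive to the fact that the form is only $A$-linear in the second variable while $B(ax,y)=B(x,y)a^*$ in the first. I expect no topological input to be needed: the statement is fiberwise and purely formal, so it suffices to verify the identity on each fiber $E_x$, where $\psi$ and $\varphi$ are honest $A$-sesquilinear forms. Thus the main (and only mild) obstacle is tracking the $*$-operation correctly through the two applications of the bridge formula.
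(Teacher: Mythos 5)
Your proof is correct and follows essentially the same route as the paper: both use the bridge identity $B(\theta x,y)=\varphi(x)y$, the symmetry of $B$ to rewrite $B(x,\theta y)=B(\theta y,x)^*$, and the $\eps$-symmetry of $\varphi$ together with $a^{**}=a$ to conclude. Your version merely makes the intermediate form $C(x,y)=\varphi(x)y$ explicit; the content is identical.
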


\begin{proof}
We have
$B(\theta x,y)\!=\psi(\psi^{-1}\varphi x)y=\!\varphi(x)y$, 
and similarly $B(x,\theta y)$ equals 
$B(\theta y,x)^* = \left[\varphi(y)x\right]^* = \eps\varphi(x)y$.
\end{proof}

We can extend $*$ to $A\oo\C$ by setting 
$(a\oo z)^*=(a^*)\oo\bar z$, where 
$\bar z$ is the complex conjugate of $z$.
There is a second anti-involution $\dag$ on $A\oo\C$,
defined by $(a\oo z)^\dag=(a^*)\oo z$.
If $E$ is  a finitely generated projective $A\oo\C$-module,
we write $E^*$ and $E^\dag$ for its dual $\Hom_{A\oo\C}(E,A\oo\C)$,
endowed with the respective anti-involutions $*$ and $\dag$.
These left module structures define $(a\oo z)f$ to be 
$e\mapsto f(e)(a^*\oo\bar z)$ for $f\in E^*$, and
$e\mapsto f(e)(a^*\oo z)$ for $f\in E^\dag$.

A {\it Hermitian form} on $E$ is a map $\psi:E\to E^*$ which
is symmetric for the anti-involution $*$. That is,
its associated form $\langle x,y\rangle\!=\!\psi(x)(y)$ satisfies
$\langle y,x\rangle = \langle x,y\rangle^*$ and
$\langle (a\oo z)x,y\rangle=\langle x,y\rangle(a^*\oo\bar z)$.

\begin{lemma}\label{r:self-adjoint}
If $\varphi$ is an $\eps$-symmetric form on $E$ for $\dag$ 
and $\psi$ is Hermitian
then $\theta=\psi^{-1}\varphi$ is $\C$-antilinear, and 
$\langle\theta x,y\rangle=\eps\;\overline{\langle x,\theta y\rangle}$,
where $\overline{a\oo z}$ denotes $a\oo\bar{z}$.
\end{lemma}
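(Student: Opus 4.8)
The plan is to follow the template of Lemma~\ref{self-adjoint}, the only new ingredient being the bookkeeping forced by the two anti-involutions $*$ and $\dag$ on $A\oo\C$. First I would record the defining relation for $\theta$: since $\theta=\psi^{-1}\varphi$ we have $\psi\theta=\varphi$, and evaluating the equal dual vectors $\psi(\theta x)=\varphi(x)$ at $y$ gives
\[
\langle\theta x,y\rangle=\varphi(x)(y)=B(x,y),
\]
where $B(x,y)=\varphi(x)(y)$ is the bilinear form of $\varphi$. Everything below is extracted from this single identity together with the semilinearity rules of the two forms.

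For the $\C$-antilinearity, I would compute $\langle\theta((a\oo z)x),y\rangle$ in two ways. On one hand, using the defining relation and the fact that $\varphi$ is a morphism for the $\dag$-structure on $E^\dag$, this equals $B((a\oo z)x,y)=B(x,y)(a^*\oo z)$. On the other hand, the Hermitian rule $\langle(a\oo w)x',y\rangle=\langle x',y\rangle(a^*\oo\bar w)$ applied with $w=\bar z$ gives $\langle(a\oo\bar z)\theta x,y\rangle=\langle\theta x,y\rangle(a^*\oo z)=B(x,y)(a^*\oo z)$. Hence $\langle\theta((a\oo z)x),y\rangle=\langle(a\oo\bar z)\theta x,y\rangle$ for all $y$, and since $\psi$ is an isomorphism (so $w\mapsto\langle w,-\rangle$ is injective) I conclude $\theta((a\oo z)x)=(a\oo\bar z)\,\theta x$; that is, $\theta$ is $A$-linear and $\C$-antilinear.

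For the adjointness identity I would chain the two symmetries. Using the Hermitian symmetry $\langle x,\theta y\rangle=\langle\theta y,x\rangle^*$, then the defining relation $\langle\theta y,x\rangle=B(y,x)$, and finally the $\eps$-symmetry of $\varphi$ for $\dag$, namely $B(y,x)=\eps\,B(x,y)^\dag$, I obtain $\langle x,\theta y\rangle=\eps\,\bigl(B(x,y)^\dag\bigr)^*$. The one point that needs care is the composite of the two anti-involutions: for $a\oo z$ one checks $\bigl((a\oo z)^\dag\bigr)^*=(a^*\oo z)^*=a\oo\bar z=\overline{a\oo z}$, so $*$ after $\dag$ is exactly the conjugation $\overline{\phantom{a}}$ of the statement. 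Thus $\langle x,\theta y\rangle=\eps\,\overline{B(x,y)}=\eps\,\overline{\langle\theta x,y\rangle}$, and applying the (involutive, $\eps$-fixing) conjugation to both sides yields the desired $\langle\theta x,y\rangle=\eps\,\overline{\langle x,\theta y\rangle}$. The main obstacle is purely notational: keeping straight which of $*$, $\dag$, or $\overline{\phantom{a}}$ acts at each step, and remembering that $\varphi$ and $\psi$ are semilinear for \emph{different} anti-involutions.
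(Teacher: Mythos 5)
Your proof is correct and follows essentially the same route as the paper: the same chain $\langle x,\theta y\rangle=\langle\theta y,x\rangle^*=[\varphi(y)x]^*=\eps[\varphi(x)y]^{\dag*}=\eps\,\overline{\langle\theta x,y\rangle}$, with the observation that $*$ composed with $\dag$ is the conjugation $a\oo z\mapsto a\oo\bar z$. Your antilinearity argument merely spells out in detail what the paper states in one line (that it follows from the $\C$-linearity of $\varphi$ and the $\C$-antilinearity of $\psi$), so there is nothing to add.
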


\begin{proof}
The proof is similar to that of Lemma \ref{self-adjoint}:
$\langle\theta x,y\rangle=\varphi(x)y$ as before, and
\[
\langle x,\theta y\rangle = \langle\theta y,x\rangle^*
= [\varphi(y)x]^* = \eps[\varphi(x)y]^{\dag*}
=\eps \overline{\langle\theta x,y\rangle}. 
\]
The fact that $\theta$ is $\C$-antilinear follows from the
$\C$-linearity of $\varphi$ and the $\C$-antilinearity of $\psi$.
\end{proof}


We first consider the following class of examples.
Recall from \cite{Schroder} that a $C^*$-algebra over $\R$ is a
Banach $*$-algebra over $\R$, $*$-isometrically isomorphic to a
norm-closed subalgebra of linear operators on a real Hilbert space.
We extend the $C^*$-structure of $A$ (as an $\R$-algebra) to
a  $C^*$-structure of $A\oo\C$ (as a $\C$-algebra)
by setting $(a\oo z)^*=(a^*)\oo\bar z$.

\begin{example}\label{C*}
Suppose that $A^0$ is a $C^*$-algebra over $\R$.
The complex $C^*$-algebra $A=A^0\oo\C$ is equipped with the
involution $\tau$ coming from complex conjugation. The structure map 
$A\to A^\op$ is $(a_0\oo z) \mapsto a_0^*\oo z$
(the $\dag$ map defined before Lemma \ref{r:self-adjoint});
as it commutes with $\tau$, 
we can consider the groups ${}_{\eps}GR^A(X)$.

In this case we consider the two auxilliary $\R$-algebras
\[
A^+=A^0\oo_{\R}M_2(\R) \quad and\quad A^-=A^0\oo_{\R}\H.
\]
That is, $A^+$ is generated by $A=A^0\oo\C$ and an element $j$
satisfying $j^2=1$ and $ij=-ji$, while $A^-$ is generated by $A$ and
an element $j$ satisfying $j^2=-1$ and $ij=-ji$.
In both cases, $j$ commutes with $A$.  The involution on
$A$ induces an involution on $A^{\pm}$ fixing $A^0$ and $j$.

If $\cA^0$ is an algebra bundle with fiber $A^0$, we can define
algebra bundles $\cA$ and $\cA^{\pm}$ in an obvious way.
\end{example}

\begin{theorem}\label{GRA}
If $\cA=\cA^0\oo\C$, with involution $\tau(a\oo z)=a\oo\bar z$,
and anti-involution $(a\oo z)\mapsto a^*\oo z$,
we have canonical isomorphisms%
\begin{equation*}
_{+1}GR^{\mathcal{A}}(X)\cong KR^{\mathcal{A}^{+}}(X)
\quad and \quad
_{-1}GR^{\mathcal{A}}(X)\cong KR^{\mathcal{A}^{-}}(X).
\end{equation*}
\end{theorem}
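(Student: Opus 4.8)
The plan is to prove the stronger statement that the additive category of $\eps$-symmetric forms on Real $\cA$-bundles is equivalent to the category of Real $\cA^{+}$-bundles when $\eps=+1$, and to the category of Real $\cA^{-}$-bundles when $\eps=-1$; passing to Grothendieck groups then yields both isomorphisms at once, with the sign of $j^{2}$ matching $\eps$. The first step is to unwind what a Real $\cA^{\pm}$-bundle is. Since $\cA^{\pm}=\cA\oplus\cA j$ with $j^{2}=\eps$ and $ja=\tau(a)j$ for $a\in\cA$ (because $j$ commutes with $\cA^{0}$ and anticommutes with $i$), and since the involution on $\cA^{\pm}$ fixes $\cA^{0}$ and $j$, a Real $\cA^{\pm}$-bundle is the same datum as a Real $\cA$-bundle $(E,\tau_E)$ together with a $\C$-antilinear operator $J$ (multiplication by $j$) satisfying $J(a\cdot e)=\tau(a)\cdot J(e)$, $J\tau_E=\tau_E J$, and $J^{2}=\eps$. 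Thus I must manufacture such a $J$ from an $\eps$-symmetric form, and conversely.

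The bridge is Lemma \ref{r:self-adjoint}. Because $\cA^{0}$ is a real $C^{*}$-algebra and $X$ is compact, every Real $\cA$-bundle $E$ carries a positive-definite Hermitian form $\psi$ (for the $C^{*}$-involution $*$) which is a morphism of Real $\cA$-bundles; one builds it locally and averages, first over an open cover using a partition of unity and then over $G=\{1,\tau\}$ to make it $\tau_E$-compatible. Fixing such a $\psi$, Lemma \ref{r:self-adjoint} shows that $\varphi\mapsto\theta=\psi^{-1}\varphi$ is a bijection between $\eps$-symmetric forms $\varphi$ for $\dag$ and invertible $\C$-antilinear operators $\theta$ satisfying $\langle\theta x,y\rangle=\eps\,\overline{\langle x,\theta y\rangle}$; the transformation rule $\theta(a\cdot e)=\tau(a)\cdot\theta(e)$ is automatic from the $\tau$-twist relating the $*$-dual to the $\dag$-dual, and $\theta\tau_E=\tau_E\theta$ because $\varphi$ and $\psi$ are both morphisms of Real $\cA$-bundles. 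So $\theta$ already has every property required of $J$ except $\theta^{2}=\eps$.

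The main work, and the place where the $C^{*}$-hypothesis is essential, is the normalization of $\theta$. Computing $\langle\theta^{2}x,x\rangle=\eps\,\overline{\langle\theta x,\theta x\rangle}$ and using that $\tau$ is a $*$-automorphism (hence preserves the positive cone), one finds that $P:=\eps\,\theta^{2}$ is a positive, invertible, $\cA$-linear, $\tau_E$-equivariant operator commuting with $\theta$. I would then set
\[
J=\theta\,P^{-1/2},
\]
using continuous functional calculus fibrewise; invertibility is uniform by compactness of $X$, so $P^{-1/2}$ is continuous in the bundle variable. Since $\theta$ commutes with the real functional calculus of $P$, one gets $J^{2}=\theta^{2}P^{-1}=\eps$, while $J$ inherits $\C$-antilinearity, the twisted $\cA$-linearity, and commutation with $\tau_E$ from $\theta$ and $P$. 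This produces the desired Real $\cA^{\pm}$-bundle $(E,\tau_E,J)$.

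Finally I would check that the two assignments are mutually inverse up to natural isomorphism, so that they define an equivalence of additive categories. In one direction a Real $\cA^{\pm}$-bundle already has $J^{2}=\eps$, whence $P=1$ and the round trip is the identity; in the other direction a routine functional-calculus computation shows that $(E,\varphi)$ and $(E,\psi J)$ are isomorphic $\eps$-symmetric forms, via a positive $\cA$-linear operator obtained from $P$. Independence of the auxiliary choice of $\psi$ follows from the convexity (hence contractibility) of the space of positive Hermitian forms, which makes the resulting functors naturally isomorphic. I expect the normalization paragraph to be the main obstacle: one must verify, continuously in the bundle variable, that $\eps\,\theta^{2}$ is positive and that passing to $P^{-1/2}$ preserves $\cA$-linearity and the Real structure; the rest is bookkeeping with Lemma \ref{r:self-adjoint} and the module description of $\cA^{\pm}$.
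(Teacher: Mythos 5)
Your proposal is correct and follows essentially the same route as the paper: fix a $\tau$-compatible Hermitian metric $\psi$, convert the $\eps$-symmetric form into the $\C$-antilinear operator $\theta=\psi^{-1}\varphi$ via Lemma \ref{r:self-adjoint}, normalize it to an operator squaring to $\eps$, and read off the resulting Real $\cA^{\pm}$-module structure (and conversely). The only cosmetic difference is that you normalize by the explicit polar decomposition $J=\theta\,(\eps\theta^{2})^{-1/2}$, whereas the paper achieves the same normalization by deforming the metric $\psi$ up to homotopy until $\theta$ is unitary.
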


\begin{proof}
We give the proof for $\eps=+1$; the proof for $\eps=-1$ is similar. 
Let $E$ be a Real $A$-bundle provided with a symmetric bilinear form $\varphi$.
Choose a Hermitian metric $\psi$ on $E$ compatible with the 
involution. (Hermitian metrics exist by \cite[2.7]{K80};
$(\psi(x)+\psi(\tau x))/2$ is compatible with $\tau$.)
Then $\varphi$ is associated to a self-adjoint invertible operator 
$\theta$ on $E$ by Lemma \ref{self-adjoint} and 
Lemma \ref{r:self-adjoint};
by construction, $\theta$ commutes with the involution and 
is complex conjugate linear.

As it is self-adjoint, all eigenvalues of $\theta$ are real and positive.
Changing the metric $\psi$ up to homotopy, we may even assume that
$\theta$ is unitary. Since $\theta=\theta^*$ we have $\theta^2=1$.
Setting $j=i\theta$, we have $j^2=1$ and $ij=-ji$ 
(as $i\theta=-\theta i$). Thus $E$ is a Real $\cA^+$ bundle.

Conversely, given a Real $\cA^+$ bundle $E$ on $X$, choose
a $G$-invariant Hermitian metric $\psi$ on $E$ whose bilinear form
satisfies $\langle jx,y\rangle=\langle x,jy\rangle$. 
Setting $\theta=ji$, $\varphi =\psi\theta$ is a symmetric 
bilinear form. By inspection, the map between $K$-groups so 
obtained is inverse to the map defined in the first paragraph.
\end{proof}

\smallskip

\begin{corollary}\label{GRA(X)}
The theory $KR^{\cA^{+}}(X)$ is canonically isomorphic to the usual
equivariant $K$-theory $K_{G}^{A^0}(X),$ where $G$ is acting trivially on
$\cA^{0}$ Therefore, we have an isomorphism%
\begin{equation*}
GR^{\cA^+}(X)\cong K_{G}^{A^0}(X).
\end{equation*}%
\end{corollary}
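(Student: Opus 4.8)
The plan is to establish the main isomorphism $KR^{\cA^+}(X)\cong K_G^{\cA^0}(X)$ directly, and then read off the displayed ``$GR$'' statement by composing with the isomorphism ${}_{+1}GR^{\cA}(X)\cong KR^{\cA^+}(X)$ of Theorem \ref{GRA}. The key observation driving everything is that the involution $\tau$ on $\cA^+$ is inner, implemented by a \emph{global} unit; this lets us re-coordinatize Real $\cA^+$-bundles as ordinary equivariant $\cA^+$-bundles, after which a Morita reduction strips off the $M_2(\R)$ factor.

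First I would record that $\tau$ on $\cA^+=\cA^0\oo M_2(\R)$ is conjugation by the global section $J=1\oo j$, where $j\in M_2(\R)=C^{1,1}$ is the fixed unit with $j^2=1$. Indeed $\tau$ fixes $\cA^0$ and $j$ while sending $i\mapsto-i$ and $k\mapsto-k$, and since $ji=-ij$ one checks that $JaJ^{-1}$ reproduces exactly these formulas on generators. As $J^2=1$ and $\tau(J)=J$, the assignment $\tau\mapsto J$ is a representation $G\to(\cA^+)^\times$ of precisely the kind appearing in Theorem \ref{inner}.

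Next I would use this global inner structure to convert a Real $\cA^+$-bundle $(E,\tau_E)$ into a \GA bundle on which $G$ acts \emph{trivially} on $\cA^+$. Setting $\tilde\tau(e)=J\cdot\tau_E(e)$ and using $\tau_E(a\cdot e)=\bar a\cdot\tau_E(e)=(JaJ)\cdot\tau_E(e)$, a direct computation gives $\tilde\tau(a\cdot e)=a\cdot\tilde\tau(e)$, and, using $\bar J=\tau(J)=J$ together with $J^2=1$, also $\tilde\tau^2=\id$. This assignment is functorial and invertible, so it yields an equivalence of categories and hence $KR^{\cA^+}(X)\cong K_G^{\cA^+}(X)$ with $G$ acting trivially on $\cA^+$; this is exactly the bundle-level analogue of Theorem \ref{inner}.

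Finally, since $\cA^+=\cA^0\oo M_2(\R)=M_2(\cA^0)$ and $G$ now acts trivially (so in particular slotwise-trivially on $M_2(\cA^0)$), Morita invariance (Example \ref{G-A bundles}(d)) gives $K_G^{\cA^+}(X)\cong K_G^{\cA^0}(X)$. Composing the two isomorphisms proves $KR^{\cA^+}(X)\cong K_G^{\cA^0}(X)$, which is $K_G^{A^0}(X)$ when $\cA^0$ is trivial, and Theorem \ref{GRA} then delivers the ``Therefore'' clause. I expect the only genuine subtlety to be the verification that $\tilde\tau$ satisfies the trivial-action intertwining relation and squares to the identity; once that inner-twisting step is in place, the Morita reduction is entirely routine.
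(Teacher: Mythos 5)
Your proposal is correct and takes essentially the same route as the paper: the paper likewise observes that $\tau$ on $\cA^+$ is conjugation by the diagonal matrix $j=(1,-1)$ with $j^2=1$ and then invokes Theorem \ref{inner} (exactly as in Example \ref{ex:C11}) together with Morita invariance to strip off the $M_2(\R)$ factor. You simply make explicit the bundle-level re-coordinatization $\tilde\tau = J\cdot\tau_E$ that the paper delegates to Theorem \ref{inner}.
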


\begin{proof}
For $\eps=+1$, the involution on $A^+=A^0\oo_{\R}M_2(\R)$ 
is conjugation by the diagonal matrix $j=(1,-1)$. Exactly as in
Example \ref{ex:C11}, Theorem \ref{inner} yields 
$KR^{\cA^{+}}(X)\cong K_{G}^{A^{0}}(X).$
\end{proof}

Despite the symmetry of Theorem \ref{GRA}, the theory for $\eps=-1$
is quite different for $\eps=+1$. Indeed,
the algebra $\cA^{-}$ is $\cA^{0}\oo_{\R}\H$,
where $G$ acts trivially on $\cA^{0}$ and by the quaternionic involution
($i\mapsto -i,j\mapsto -j$) on $\H$. That is, $\tau$ is conjugation by
$k=ij$. Note that Theorem \ref{inner} 
does not apply because $k^2\ne1$.
Indeed, the group $KR^{\cA-}(X)$ is different from $KSp_G(X)$ in general. 

\begin{example}\label{ex:GRA}
The group $GR(X)$ discussed in \cite{KWpre} is the special case
$GR^{\C}(X)$, where $G$ acts on $A=\C$ by conjugation and $A^0=\R$.
by Corollary \ref{GRA(X)} we have $GR(X)\cong KO_G(X)$.

On the other hand, the group ${}_{-1}GR(X)$ is isomorphic to
$KR^{\H}(X)$ by Theorem \ref{GRA}, where $\tau$ is conjugation by $i$
(as in Example \ref{quaternionic}).
This seems to be a new example of a twisted $K$-group.
\end{example}

\smallskip
Now suppose that $A$ is a $C^*$-algebra over $\R$,
with a $G$-action compatible with the $C^*$-structure,
so that we can define ${}_{\eps}GR^A(X)$.

\begin{theorem}\label{realGRA}
If $A$ is a $C^*$-algebra over $\R$, and $G$ acts compatibly with $*$,
we have canonical isomorphisms%
\begin{equation*}
_{\eps}GR^{\cA}(X)\cong KR^{\cA'}(X),
\end{equation*}
where $\cA'$ is the algebra bundle $\cA\oo_{\R}\R[t]/(t^2=\eps)$.
\end{theorem}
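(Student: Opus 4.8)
The plan is to follow the proof of Theorem \ref{GRA} closely, the difference being that here $A$ carries no built-in $\C$: the operator $\theta=\psi^{-1}\varphi$ attached to a form $\varphi$ and a metric $\psi$ is now $\cA$-linear, and it is this $\theta$ itself (rather than $i\theta$) that supplies the extra generator. I equip $\cA'=\cA\oo_{\R}\R[t]/(t^2=\eps)$ with the involution extending $\tau$ and fixing $t$. Since $t$ is central and $\bar t=t$, a Real $\cA'$-bundle is precisely a Real $\cA$-bundle $E$ together with an $\cA$-linear, $\tau$-equivariant operator $\theta$ (the action of $t$) satisfying $\theta^2=\eps$. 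The goal is to match such $\theta$ with $\eps$-symmetric forms.

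For the forward map, let $\varphi$ be an $\eps$-symmetric form on a Real $\cA$-bundle $E$ and choose a $G$-invariant Hermitian metric $\psi$ (these exist, as in the proof of Theorem \ref{GRA}). Set $\theta=\psi^{-1}\varphi$. As $\varphi$ and $\psi$ are morphisms of Real $\cA$-bundles, $\theta$ is $\cA$-linear, $\tau$-equivariant and invertible, and by Lemma \ref{self-adjoint} it is $\eps$-self-adjoint for $\psi$. Working fibrewise in the real $C^*$-algebras $\operatorname{End}_{\cA_x}(E_x)$, I would use continuous functional calculus to normalize: the path $\theta_s=\theta(\theta^*\theta)^{-s/2}$ consists of invertible $\eps$-self-adjoint operators and carries $\theta_0=\theta$ to $\theta_1$ with $\theta_1^2=\theta^2(\theta^*\theta)^{-1}=\eps$. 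This is a homotopy of $\eps$-symmetric forms $\psi\theta_s$, hence leaves the class in ${}_{\eps}GR^{\cA}(X)$ unchanged; letting $t$ act as $\theta_1$ then makes $E$ a Real $\cA'$-bundle.

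For the inverse, start from a Real $\cA'$-bundle $E$ and write $\theta$ for the action of $t$. Choose a $G$-invariant (that is, $\tau$-compatible) Hermitian metric $\psi_0$, and average it over the cyclic group $\langle\theta\rangle$ (of order $2$ or $4$ according as $\eps=\pm1$) to obtain a metric $\psi$ for which $\theta$ is an isometry; this averaging preserves $\tau$-compatibility because $\theta$ commutes with the involution. Then $\theta^*=\theta^{-1}=\eps\theta$, so $\theta$ is $\eps$-self-adjoint and $\varphi=\psi\theta$ is an $\eps$-symmetric form by the computation of Lemma \ref{self-adjoint}. Both constructions are additive and, by inspection, mutually inverse up to the normalizing homotopy and the change of auxiliary metric, which yields the asserted isomorphism ${}_{\eps}GR^{\cA}(X)\cong KR^{\cA'}(X)$.

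The main obstacle is the normalization step in families: one must check that the fibrewise functional calculus on the real $C^*$-algebra bundle $\operatorname{End}_{\cA}(E)$ varies continuously over $X$ and commutes with both the $G$-action and the involution $\tau$, and that $\psi\theta_s$ is genuinely a homotopy through $\eps$-symmetric forms. Independence of the chosen metric, needed for the forward map to descend to Grothendieck groups, and compatibility of the averaging with $\tau$ are the remaining routine points, handled exactly as for Theorem \ref{GRA}.
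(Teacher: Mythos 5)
Your proposal is correct and follows essentially the same route as the paper: form $\theta=\psi^{-1}\varphi$, normalize it up to homotopy so that $\theta^2=\eps$, and identify the resulting structure with the action of the central fixed element $t$ of $\cA'$, with the converse obtained by averaging a metric so that $\theta$ becomes an isometry. The only (cosmetic) discrepancy is that in this real $C^*$-setting the auxiliary metric should be a Riemannian (symmetric positive) one rather than a Hermitian one, as the paper notes explicitly; your functional-calculus path $\theta_s=\theta(\theta^*\theta)^{-s/2}$ is just a more explicit version of the paper's ``changing the metric up to homotopy,'' and your treatment of the converse actually handles $\eps=-1$ slightly more carefully than the paper does.
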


Note that $\cA'\cong \cA\times \cA$ if $\eps=+1$, while
$\cA'\cong \cA\oo_{\R}\C$ if $\eps=-1$.

\begin{proof}
We indicate the modifications to the proof of Theorem \ref{GRA}.
Given $(E,\varphi)$, we choose a Riemannian metric $\psi$
(instead of a Hermitian metric), compatible with the involution,
and form the $\eps$-self-adjoint operator $\theta$ 
using Lemma \ref{self-adjoint}. Changing the metric up to homotopy,
we may assume that $\theta$ is orthogonal, so $\theta^2=\eps I$. 
Thus $E$ is a Real $\cA[\theta]$-bundle, where $a\theta=\theta a$.

Conversely, given a Real $A[\theta]$-bundle
$E$, choose a $G$-invariant Riemannian metric $\psi$ on $E$ whose
bilinear form satisfies 
$\langle\theta x,y\rangle=\langle x,\theta y\rangle$.
Then $\varphi=\psi\theta$ is a symmetric form on $E$.
This proves that $GR^{\cA}(X)\cong KR^{\cA[\theta]}(X)$.
\end{proof}

\begin{examples}
(a) if $A=M_n(\R)$ and $*$ is matrix transpose, then 
the involution can be conjugation by an orthogonal matrix $x$.
If $x^2=1$ then $GR^A(X)\cong KR^A_G(X)\cong KO_G(X)$
by Example \ref{ex:K_G}.

(b) if $A=M_n(\C)$ and $*$ is conjugate transpose, then 
the involution can be conjugation by a unitary matrix $x$.
Here $GR^A(X)\cong KU_G(X)$.

(c) if $A=M_n(\H)$ and $*$ is matrix transpose composed with
$\H\cong\H^\op$, then the involution can be conjugation by a 
symplectic matrix $x$. In this case, $GR^A(X)\cong KSp_G(X)$,
again by Example \ref{ex:K_G}.
\end{examples}

\end{document}